\newtheorem{theorem}{Theorem}[section]
\newtheorem{corollary}[theorem]{Corollary}
\newtheorem{lemma}[theorem]{Lemma}
\newtheorem{prop}[theorem]{Proposition}
\newtheorem{defi}[theorem]{Definition}
\newtheorem{remark}[theorem]{Remark}
\theoremstyle{definition}
\newcommand{\mb}{\mathbf}
\newcommand{\al}{\alpha}
\newcommand{\Z}{\mathbb{Z}}
\newcommand{\paren}[1]{\left( #1 \right)}
\newcommand{\veco}[1]{\overrightarrow{#1}}
\newcommand{\raw}{\rightarrow}
\begin{document}

\title[Combinatorial Sutured TQFT]{Combinatorial Sutured TQFT as Exterior Algebra}

\author[Evan Fink]{Evan Fink}

\address{Department of Mathematics,
University of Georgia, 
Athens, GA 30602}
\email{efink@math.uga.edu}

\begin{abstract}
The idea of a \emph{sutured topological quantum field theory} was introduced by Honda, Kazez and Mati\'{c} in \cite{HKM}. A sutured TQFT associates a group to each \emph{sutured surface} and an element of this group to each \emph{dividing set} on this surface.  The notion was originally introduced to talk about contact invariants in Sutured Floer Homology.  We provide an elementary example of a sutured TQFT, which comes from taking exterior algebras of certain singular homology groups.  We show that this sutured TQFT coincides with that of \cite{HKM} using $\Z_2$-coefficients. The groups in our theory, being exterior algebras, naturally come with the structure of a ring with unit.  We give an application of this ring structure to understanding tight contact structures on solid tori.
\end{abstract}

\maketitle

\section{Introduction}

This paper will concern algebraic objects -- \emph{sutured topological quantum field theories} -- associated to a type of decorated surface -- \emph{sutured surfaces} -- and certain embedded multicurves -- \emph{dividing sets} -- on these surfaces.  While we find the objects of study interesting from an algebraic perspective, everything herein is motivated by contact geometry.

A \emph{contact structure} $\xi$ on a 3-manifold $M$ is a nowhere-integrable plane field. To use cut-and-paste methods to study contact structures, one uses the notion of a \emph{convex surface}, introduced by Giroux \cite{Giroux}: this is a surface $\Sigma$ in $(M, \xi)$ such that there is a vector field $X$ that is everywhere transverse to $\Sigma$ and whose flow preserves $\xi$.  A convex surface is typically taken to be closed, or with boundary that is Legendrian (i.e., tangent to $\xi$).  The \emph{dividing set} of a convex surface is the multicurve $\Gamma \subset \Sigma$ of points $p$ where $X_p \in \xi_p$.  Of course, as defined, the dividing set on $\Sigma$ depends also on the particular choice of vector field $X$; but in fact it is shown in \cite{Giroux} that the restriction of $\xi$ to a neighborhood of $\Sigma$ is determined (up to isotopy through contact structures) by the isotopy class of its dividing set.

A sutured manifold $(M, \Gamma)$ is a compact, oriented, not necessarily
connected 3-manifold $M$ with boundary, together with an oriented embedded 1-manifold $\Gamma \subset \partial M$, such that $\partial M \setminus \Gamma$ is divided into two disjoint subsurfaces $R^+$ and $R^-$, such that $\Gamma$ is the oriented boundary of $\overline{R^+}$ and $-\Gamma$ is the oriented boundary of $\overline{R^-}$.  (The original definition, due to Gabai \cite{Gabai}, is slightly different.)    Sutured manifolds are important in three-dimensional contact geometry because for a contact 3-manifold with convex boundary, the dividing set on the boundary forms a set of sutures. 

In \cite{juhasz}, Juh\'{a}sz defines an invariant of a sutured manifold $(M, \Gamma)$, the \emph{Sutured Floer homology} $SFH(M, \Gamma)$.   (Technically, the sutured manifold must be \emph{balanced}: every boundary component must contain a suture, and $R^+$ and $R^-$ must have equal Euler characterisitics.) This invariant, a variation on the Heegaard Floer homology of Ozsv\'{a}th and Szab\'{o} \cite{ozsz}, is defined by counting holomorphic curves in spaces associated to Heegaard splittings of $M$ that are suitably adapted to $\Gamma$.  In \cite{HKM1}, Honda, Kazez and Mati\'{c} define an invariant $EH(\xi) \in SFH(-M, -\Gamma)$ of a contact three-manifold with convex boundary $(M, \xi)$, where $\xi$ is a postive and cooriented contact structure, and $\Gamma$ is the dividing set on the boundary.

The invariant $EH(\xi)$ is studied further in \cite{HKM}, where interesting observations are made about $S^1$-invariant contact structures on manifolds of the form $S^1 \times \Sigma$, where $\Sigma$ is a surface with non-empty boundary, $S^1 \times \partial \Sigma$ is convex, and $S^1$ acts on the manifold and the contact planes in the obvious manner.   For these manifolds, the dividing set on $\partial(S^1 \times \Sigma)$ will be $\Gamma = S^1 \times F$ where $F$ is a finite set of points in $\partial \Sigma$; and the contact structure will in fact be determined by looking at the dividing set on the convex surface $\{pt\} \times \Sigma$.
It is shown in \cite{HKM} that the relationship between dividing sets on $\Sigma$ and the contact invariants of the associated $S^1$-invariant contact structure can be described in a manner that is somewhat reminiscent of a topological quantum field theory.  They proceed to define a new notion, a \emph{sutured TQFT}.  

\subsection{Summary of Sutured TQFT}

To summarize the idea of Sutured TQFT and our results, let us make some imprecise
definitions, reviewing the precise definitions in Sections \ref{sec:not} and \ref{sec:tqft}. 

Informally, a \emph{sutured surface} is a pair $(\Sigma, F)$, where $\Sigma$ is a surface without closed components and $F$ denotes a finite subset of $\partial \Sigma$ (the \emph{sutures}). Each component of $\partial \Sigma$ contains a positive even number of points of $F$, which divide $\partial \Sigma$ into alternating \emph{positive} and \emph{negative} arcs. 

Likewise, we can informally define a \emph{dividing set} $K$ on $(\Sigma, F)$ to be an oriented 1-manifold in a sutured surface, with boundary in $F$, which divides the surface into disjoint \emph{positive} and \emph{negative} regions such that no two neighboring regions bare the same sign.  

Finally, a \emph{gluing} on $(\Sigma, F)$ is, roughly, a quotient map $\phi_{\tau}$ from $(\Sigma, F)$ to a new sutured surface $(\Sigma_{\tau}, F_{\tau})$, obtained by identifying submanifolds of $\partial \Sigma$ by a diffeomorphism $\tau$, in a manner which appropriately respects the sutures. The image of a dividing set $K$ on $(\Sigma, F)$ will be a dividing set $K_{\tau}$ on $(\Sigma_{\tau}, F_{\tau})$.

\begin{defi}
\label{defi:quote}
A \emph{sutured TQFT} $\mathcal{F}$ is an assignment of

\begin{itemize}

\item a graded abelian group $V(\Sigma, F)^{\mathcal{F}}$ to each sutured surface $(\Sigma, F)$ (the \emph{sutured surface group}); 

\item a subset $c^{\mathcal{F}}(K)$ of $V(\Sigma, F)^{\mathcal{F}}$ of the form $\{x, -x\}$ to each dividing set $K$ (the \emph{contact subset});

\item and a pair of maps $\pm\Phi_{\tau}^{\mathcal{F}}: V(\Sigma, F)^{\mathcal{F}} \rightarrow V(\Sigma_{\tau}, F_{\tau})^{\mathcal{F}}$ to each gluing producing $(\Sigma_{\tau}, F_{\tau})$ from $(\Sigma, F)$ (the \emph{gluing morphism set}).

\end{itemize}

Among other things, we require that $c^{\mathcal{F}}(K) = \{0\}$ if $K$ contains a homotopically trivial closed component; and that $\pm\Phi^{\mathcal{F}}_{\tau}$ takes $c^{\mathcal{F}}(K)$ to $c^{\mathcal{F}}(K_{\tau})$.  
\end{defi}

In \cite{HKM}, Sutured Floer homology is used to define a sutured TQFT.  The somewhat strange sign issues noted there explain the unsightly use of contact subsets rather than elements -- in their theory, there is no coherent way to pick out distinguished elements from these subsets. 

\subsection{Main Results}
The goal of this paper is to define an alternative construction satisfying the sutured TQFT axioms.  Our construction replaces Sutured Floer homology groups with exterior algebras on certain singular homology groups.  We make assignments as above, which we label as $V(\Sigma, F)^X$, $c^X(K),$ and $\pm\Phi^X_{\tau}$; the relevant Definitions are \ref{defi:sutalg}, \ref{defi:contelt}, and \ref{defi:gluemorph}, respectively.  

Our main results, stated more precisely as Theorems \ref{thm:mainthm} and \ref{thm:isom}, are the following.  

\begin{theorem}
The assignments $V(\Sigma, F)^X$, $c^X(K)$ and $\pm\Phi^X_{\tau}$ form a sutured TQFT.  
\end{theorem}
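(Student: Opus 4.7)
The plan is to verify the sutured TQFT axioms one at a time, working directly from the definitions of $V(\Sigma,F)^X$, $c^X(K)$, and $\pm\Phi^X_\tau$ referenced in the introduction. The overall architecture of the argument should mirror the structure of Definition \ref{defi:quote}: first confirm that each assignment is well-defined at the level of isomorphism classes (including the grading), then check the vanishing axiom for trivial closed curves, and finally establish the compatibility between the contact subsets and the gluing morphism set.

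I would begin with the easy items. Since $V(\Sigma,F)^X$ is an exterior algebra on a singular homology group, functoriality under homeomorphisms of sutured surfaces is essentially automatic from the functoriality of $H_*$ and of $\Lambda^{\bullet}$, and the grading inherited from the exterior algebra is clearly preserved. Next, the fact that $c^X(K)$ has the form $\{x,-x\}$ should reduce to the observation that the contact element is defined by wedging together homology classes associated to components of $K$ in an order that is fixed only up to even/odd permutation; any choice of orientation data introduces at most an overall sign, and the resulting two-element subset is then intrinsic to $K$.

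The vanishing axiom for a homotopically trivial closed component $\gamma\subset K$ ought to follow by computing the homology class $[\gamma]$ in the relevant singular homology group: a nullhomotopic loop represents $0$, so any wedge product whose defining data includes $[\gamma]$ is itself $0$, forcing $c^X(K)=\{0\}$. I would write this out cleanly enough to make explicit which singular homology group is in play and which relative cycle $\gamma$ determines, since the rest of the argument will reuse those identifications.

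The main obstacle, and therefore the step to which I would devote the bulk of the write-up, is the gluing axiom: showing that $\pm\Phi^X_\tau$ sends $c^X(K)$ to $c^X(K_\tau)$. Here I would first analyze what the gluing map $\phi_\tau\colon(\Sigma,F)\to(\Sigma_\tau,F_\tau)$ induces on the singular homology groups defining $V$, and then extend this to the exterior algebras, checking that the extension is well-defined up to sign by the same convention used for $c^X(K)$. The identity $\Phi^X_\tau(c^X(K))=c^X(K_\tau)$ should then be verified componentwise: arcs of $K$ that are not identified by $\tau$ map to arcs of $K_\tau$ with matching homology classes, while pairs of arcs identified at their endpoints combine into a single arc or closed component of $K_\tau$, and one must check that the wedge of the two old classes equals (up to sign) the class of the new component. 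I would argue this using a Mayer--Vietoris or long-exact-sequence description of the singular homology of $\Sigma_\tau$ in terms of that of $\Sigma$ and of the glued region, reducing the required identity to a diagram chase. Tracking signs through this identification will be the fiddly part; an internal sign-convention lemma, established once and then invoked repeatedly, should allow the final equality $\pm\Phi^X_\tau(c^X(K))=c^X(K_\tau)$ to be read off.
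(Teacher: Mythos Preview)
Your proposal rests on a misreading of Definition \ref{defi:contelt}: the contact element $c^X(\veco{K})$ is \emph{not} a wedge of homology classes of the components of $K$. It is the image, under the inclusion-induced map ${i_K}_*$, of a generator of $\Lambda^{top}\paren{H_1(R^+(K),\al^+)}$, followed by the projection $\pi_{L(K)}$. The classes being wedged are represented by arcs lying in the positive region $R^+(K)$ with endpoints on $\al^+$; they are not the components of $K$ itself.

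This misreading breaks two of your steps. For the vanishing axiom, a homotopically trivial closed component $\gamma\subset K$ does not contribute a factor $[\gamma]=0$ to any wedge; rather, $\gamma$ bounds a disk that is an isolated component of $R^+(K)$ or of $R^-(K)$, and vanishing follows from Proposition \ref{prop:nontriv} (via Proposition \ref{prop:isocomps}): either ${i_K}_*$ fails to be injective on first homology, or its image lands in the wrong exterior degree and is killed by $\pi_{L(K)}$. For the gluing axiom, the morphism $\Phi^X_{\veco{\tau}}$ is not simply the map on exterior algebras induced by ${\phi_{\tau}}_*$: by Definition \ref{defi:gluemorph} it is $\iota_\eta\circ{\phi_{\tau}}_*$, where the interior product $\iota_\eta$ is needed to cut away classes with boundary on vertices swallowed into the interior of $\Sigma_\tau$. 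Your componentwise arc-tracking scheme never engages with this mechanism. The paper's argument (Theorem \ref{thm:respect}) instead shows that ${\phi_{\tau}}_*$ restricts to an isomorphism $H_1(R^+(K),\al^+)\to H_1\paren{R^+(K_\tau),\phi_\tau(\al^+)}$ and then analyzes, via the long exact sequence of the triple $\paren{\Sigma_\tau,\phi_\tau(\al^+),\al^+_\tau}$, how $\iota_\eta$ acts on the resulting element, case-splitting on whether $K_\tau$ acquires isolated components.
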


We denote this TQFT by $\mathcal{X}$, or $\mathcal{X}(\Z_2)$ when taken using $\Z_2$ coefficients.  

\begin{theorem}
Let $\mathcal{SFH}(\Z_2)$ denote the sutured TQFT of [4], taken with $\Z_2$ coefficients; then $\mathcal{SFH}(\Z_2)$ is equivalent to $\mathcal{X}(\Z_2)$.
\end{theorem}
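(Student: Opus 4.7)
The plan is to construct a natural equivalence between the two sutured TQFTs $\mc{X}(\Z_2)$ and $\mc{SFH}(\Z_2)$ by exploiting the rigidity of the sutured TQFT structure. The key observation is that a sutured TQFT is largely determined by its values on a small family of basic sutured surfaces together with how those values transform under elementary gluings, so the equivalence reduces to a finite combinatorial check.

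First I would establish that for every sutured surface $(\Sigma, F)$, the groups $V(\Sigma, F)^X$ and $V(\Sigma, F)^{\mc{SFH}(\Z_2)}$ have the same rank as $\Z_2$-vector spaces. On the exterior algebra side this is immediate from Definition \ref{defi:sutalg}; on the $\mc{SFH}$ side the computation was essentially performed in \cite{HKM}, using that the associated sutured manifolds $S^1 \times \Sigma$ with $\Gamma = S^1 \times F$ have tractable sutured Floer homology. This makes an abstract isomorphism possible; to pin down a preferred one, I would match bases of contact elements indexed by a standard family of dividing sets, starting with polygons and annuli and building up to arbitrary $(\Sigma, F)$ by successive gluings.

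The remaining task, which is the heart of the proof, is to verify that this identification intertwines the gluing morphisms: for every gluing $\tau$, the map $\pm\Phi^X_\tau$ must correspond to $\pm\Phi^{\mc{SFH}(\Z_2)}_\tau$ under the chosen isomorphisms. To make this tractable I would first reduce to elementary gluings (those identifying a single pair of boundary arcs around a suture) and then use the bypass relation, which holds in both theories: in $\mc{SFH}$ via the bypass exact triangle of \cite{HKM1}, and in $\mc{X}$ by direct algebraic calculation on exterior algebras. Because Definition \ref{defi:quote} requires gluing morphisms to send contact subsets to contact subsets, agreement on a spanning set of contact elements is enough to determine the induced map on each sutured surface group.

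The principal obstacle is this last compatibility check, since the $\mc{SFH}(\Z_2)$ gluing morphisms are defined via contact cobordisms and holomorphic curve counts while the $\mc{X}(\Z_2)$ gluing morphisms are defined by algebraic operations on singular homology. Bridging the two will likely require a careful analysis of how contact elements behave under bypass attachments, matching up both sides on a sufficiently rich family of dividing sets on the disk. A subsidiary issue, which is the reason the equivalence is stated only over $\Z_2$, is the sign ambiguity in the $\mc{SFH}$ theory noted after Definition \ref{defi:quote}: working modulo $2$ removes this ambiguity and allows for a clean natural equivalence rather than one defined only up to sign.
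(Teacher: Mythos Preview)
Your outline matches the paper's strategy closely: both build the equivalence from a base case via gluing, use contact elements as distinguished bases, and invoke the bypass relation over $\Z_2$ to extend to all dividing sets. So the approach is essentially the same.

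That said, the paper supplies two concrete mechanisms that your sketch leaves vague. First, rather than ``polygons and annuli,'' the paper reduces everything to the single building block $\paren{D^2,\mb{F}(2)}$: Proposition~\ref{prop:cutitdown} shows that any sutured surface admits a \emph{quadrangulation}, i.e.\ can be cut along arcs into a disjoint union of copies of $\paren{D^2,\mb{F}(2)}$, and the reverse gluing $\tau_0$ has $\Phi_{\tau_0}$ an isomorphism in both theories (for $\mc{SFH}$ this is Lemma~7.9 of \cite{HKM} applied to a composition of simple gluings). On $\paren{D^2,\mb{F}(2)}$ the graded pieces are one-dimensional, so the isomorphism $H$ is forced; then $H_{\Sigma,\mb{F}} = \Phi^X_{\tau_0}\circ H \circ (\Phi^{SFH}_{\tau_0})^{-1}$. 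Second, to show this $H$ respects \emph{all} contact elements (not just those transverse to the chosen $1$-skeleton $S$), the paper inducts on the number of excess intersections of $K$ with $S$: a bypass move along $S$ produces a triple $\{K_0,K_1,K_2\}$ where $K_1,K_2$ have fewer excess intersections, and the $\Z_2$ bypass relation $c(K_0)=c(K_1)+c(K_2)$ in both theories closes the induction. You place the bypass relation in the gluing-compatibility step; in the paper it is used earlier, and gluing compatibility then follows formally because both $\Phi^{SFH}_{\tau}$ and $H^{-1}\circ\Phi^X_{\tau}\circ H$ send the contact-element basis $\{c^{SFH}(K_i)\}$ to $\{c^{SFH}({K_i}_{\tau})\}$.
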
  

The notion of equivalence is detailed in Definition \ref{defi:equiv}.  We prove this by showing that in this sense, all such TQFTs over $\Z_2$ are equivalent,provided they obey a couple of extra conditions.  We expect that the same is true over $\Z$ (indeed, we have a sketch of a proof), but showing this requires a more involved argument, due to the aforementioned sign issues.

At the moment, our view is that this construction accomplishes three things.  First, it provides an easily computed tool for understanding contact elements in Sutured Floer Homology.  Secondly, it elucidates the structure of Suture Floer Homology itself: since they are exterior algebras, the groups $V(\Sigma, F)^X$ of our TQFT all have the structure of a ring with unit.  This makes some sense in light of the fact that the Sutured Floer homology of a sutured manifold $(M, \Gamma)$ is known to admit an action by $\Lambda\paren{H_1(M)}$ \cite{Ni}.  Finally, it provides a tool for detecting when a convex sphere or disk with a given dividing set has a tight neighborhood.  Of course, we have a straightfoward test for this (Giroux's criterion \cite{Giroux}, see Section \ref{sec:disks}); but, crucially, the TQFT behaves nicely with respect to cutting and pasting.  We discuss this last point in Section \ref{sec:disks}, giving an application to the study of tight contact structures on solid tori; this utilizes the exterior product, showing that this operation has some significance at least for the case of disks.

\subsection{Further questions}

We have several questions that we would like to investigate.  The first one is, of course, to show that there is an isomorphism between our TQFT and the SFH one using $\Z$ coefficients.  Two avenues to investigate this would be to look carefully at the  $\Lambda\paren{H_1(M)}$-action on Sutured Floer homology, or to strengthen the approach we use herein for the $\Z_2$ case, showing that there is little freedom in how an TQFT assigns contact elements to dividing sets.  We believe that we can prove this using the latter strategy.

We also wonder if there is a similar construction that mimics Sutured Floer homology with twisted coefficients.  Note that the details of such a construction would have to be somewhat different.  Indeed, our construction, almost by definition, assigns $0$ to any dividing set which is \emph{isolating} (see Section \ref{sec:not} for the definition); in \cite{Massot}, Massot notes that the twisted coefficient analogue for Sutured Floer homology assigns non-trivial invariants to some isolating dividing sets.  

We would like to know to what exent our TQFT can be helpfully extended to study contact structures on non-trivial surface bundles over $S^1$.  In addition, we would like to know the extent to which the TQFT illuminates the \emph{contact category}, described in \cite{contcat}.  

The considerations of Section \ref{sec:disks} can certainly be extended to examine the tight contact structures on handlebodies with a given convex boundary.  We wonder exactly how much can be said about this.

We find the special case of sutured disks to be a fertile ground to study the algebraic structure of this theory.  The central question we have about these is, ``to what extent can we \emph{algebraically} characterize the elements which appear as contact elements?''  Questions like this one have been studied extensively in a series of papers by Daniel Mathews \cite{Mathews}, \cite{Mathews2}, \cite{Mathews3}, \cite{itsybitsy}.  Finally, we should mention that these papers also contain a wealth of fascinating ideas relating the sutured TQFT notion to more standard ideas from quantum field theory (among other things), which we would hope to be able to illumnate.

\subsection{Organization}

In Section \ref{sec:not}, we set notation.  In Section \ref{sec:assign}, we define the assignments of our Exterior TQFT: sutured surface algebra, contact subsets/elements, and gluing morphisms.  In Section \ref{sec:tqft}, we review the notion of a sutured TQFT from \cite{HKM}, and establish that the Exterior TQFT does fit this notion; we then compare properties of this TQFT with those of the Sutured Floer Homology TQFT, before establishing the equivalence of the two TQFTs over $\Z_2$.  In Section \ref{sec:disks}, we show how the product operation on our algebra has some significance, related to Giroux's criterion for convex spheres with tight neighborhoods, and we describe an application to understanding contact structures on solid tori.

\subsection{Acknowledgements}

The author thanks Eric Burgess, Will Kazez, Daniel Mathews and Gordana Mati\'{c} for valuable insights and comments.

\section{Sutured Surfaces, Dividing Sets, and Gluing}
\label{sec:not}

We recall and enhancing some familiar notions from contact geometry, proving some basic results.

\subsection{Sutured surfaces}
\label{sec:sutsur}
Let $\Sigma$ be a compact, oriented surface, possibly disconnected, with no closed components.  As mentioned, a \emph{sutured surface} is essentially just a surface together with some points on the boundary, which dividing the boundary into alternating positive and negative arcs.  However, to facilitate our exposition, we cram some extra data into the notion up front.  So, the definition we use is the following.

\begin{defi}
\label{def:sutsur}
A \emph{sutured surface} is the data 
$$(\Sigma, F^+, F^-, \al^+, \al^-)$$ described as follows.

\begin{itemize}

\item $\Sigma$ is a compact, oriented, possibly disconnected surface with no closed components.

\item $F^+, F^-, \al^+,$ and $\al^-$ are each (pairwise disjoint) finite subsets of $\partial \Sigma$ which satisfy the following properties: if $C$ is a component of $\partial \Sigma$, then the intersection of $C$ with each of these sets must have the same cardinality, which must be positive; if we start on a point of $F^+$, and traverse $C$ in the direction given by its orientation, we encounter points of these sets in the order $F^+, \al^+, F^-, \al^-, F^+, \al^+, \ldots$. 

\end{itemize}
\end{defi}
 
We will generally denote such an object simply by $(\Sigma, \mb{F})$, with $\mb{F}$ denoting all the other data.  The points in $F = F^+ \cup F^-$ are the \emph{sutures}, and the points of $\al = \al^+ \cup \al^-$ are the \emph{vertices}.  Let $n(\mb{F})$ be the common cardinality of $F^+, F^-, \al^+$ and $\al^-$.  Also, let 
$$L(\Sigma, \mb{F}) = n(\mb{F}) -\chi(\Sigma).$$

\begin{lemma}
\label{lem:lrank}
The rank of $H_1(\Sigma, \al^+)$ is $L(\Sigma, \mb{F})$.
\end{lemma}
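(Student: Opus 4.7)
The plan is to extract rank $H_1(\Sigma,\alpha^+)$ from the long exact sequence of the pair $(\Sigma,\alpha^+)$, using that $\alpha^+$ is a finite set of points and that no component of $\Sigma$ is closed.

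Write $b$ for the number of connected components of $\Sigma$. The relevant portion of the long exact sequence is
$$H_1(\alpha^+) \to H_1(\Sigma) \to H_1(\Sigma,\alpha^+) \to H_0(\alpha^+) \to H_0(\Sigma) \to H_0(\Sigma,\alpha^+) \to 0.$$
Since $\alpha^+$ is zero-dimensional, $H_1(\alpha^+) = 0$ and $H_0(\alpha^+) \cong \Z^{n(\mathbf{F})}$. The hypotheses in Definition \ref{def:sutsur} force every boundary component of $\Sigma$ to meet $\alpha^+$ in a positive number of points; in particular, since $\Sigma$ has no closed components, every component of $\Sigma$ meets $\alpha^+$. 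Hence the map $H_0(\alpha^+) \to H_0(\Sigma)$ is surjective, $H_0(\Sigma,\alpha^+) = 0$, and the sequence yields
$$\mathrm{rank}\, H_1(\Sigma,\alpha^+) = \mathrm{rank}\, H_1(\Sigma) + n(\mathbf{F}) - b.$$

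To finish, I would compute $\mathrm{rank}\, H_1(\Sigma)$ via the Euler characteristic. Because $\Sigma$ is a compact oriented surface with no closed components, $H_2(\Sigma) = 0$, so $\chi(\Sigma) = b - \mathrm{rank}\, H_1(\Sigma)$ and thus $\mathrm{rank}\, H_1(\Sigma) = b - \chi(\Sigma)$. Substituting gives
$$\mathrm{rank}\, H_1(\Sigma,\alpha^+) = n(\mathbf{F}) - \chi(\Sigma) = L(\Sigma,\mathbf{F}),$$
as desired.

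There is no real obstacle here: the only point requiring any care is verifying that the connecting map $H_0(\alpha^+) \to H_0(\Sigma)$ is surjective, which is precisely where the hypothesis that each boundary component carries a positive number of $\alpha^+$-points (and that $\Sigma$ has no closed components) is used. Everything else is the standard long exact sequence plus the Euler characteristic computation.
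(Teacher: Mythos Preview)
Your proof is correct and follows essentially the same argument as the paper: both use the long exact sequence of the pair $(\Sigma,\alpha^+)$, observe that $H_1(\alpha^+)$ and $H_0(\Sigma,\alpha^+)$ vanish (the latter because every component of $\Sigma$ meets $\alpha^+$), and then finish with the Euler characteristic relation $\chi(\Sigma) = b_0(\Sigma) - b_1(\Sigma)$ valid for surfaces with no closed components. Your write-up is slightly more explicit about why $H_0(\Sigma,\alpha^+)=0$ and why $H_2(\Sigma)=0$, but the route is identical.
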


\begin{proof}
Let $r$ denote the rank of $H_1(\Sigma, \al^+)$.
Consider a portion of the long exact sequence of the pair $(\Sigma, \al^+)$,
$$ H_1(\al^+) \rightarrow H_1(\Sigma) \rightarrow H_1(\Sigma, \al^+) \rightarrow H_0(\al^+) \rightarrow H_0(\Sigma) \rightarrow H_0(\Sigma, \al^+).$$
The first group is trivial; so is the last, since we have demanded that $\Sigma$ have no closed components, and that all components of $\partial \Sigma$ contain vertices. The ranks of $H_1(\Sigma)$, $H_0(\al^+)$ and $H_0(\Sigma)$ are respectively $b_1(\Sigma)$, $n(\mb{F})$, and $b_0(\Sigma)$; therefore, $b_1(\Sigma) + n(\mb{F}) = r + b_0(\Sigma)$. Since $\Sigma$ has no closed components, $r$  equals $n(\mb{F}) - \chi(\Sigma)$.
\end{proof}

Every component of $\partial \Sigma \setminus F$ contains exactly one point in $\al^+ \cup \al^-$.  Let $A^+$ denote the union of the closures of those components which contain a point of $\al^+$, and refer to these components as \emph{positive boundary arcs}.  Similarly define $A^-$, whose components are \emph{negative boundary arcs}.  Both $A^+$ and $A^-$ are understood to have orientation inherited from $\partial \Sigma$.

The following observations will inform our notation throughout.  
First, notice that $(\Sigma, \al^+)$ is homotopy equivalent to $(\Sigma, A^+)$, and hence $H_1(\Sigma, \al^+)$ is naturally identified with $H_1(\Sigma, A^+)$ (and likewise for $H_1(\Sigma, \al^-)$ and $H_1(\Sigma, A^-)$).  Also, by Poincar\'{e} duality, $H^1(\Sigma, A^+) \cong H_1(\Sigma, A^-)$.  Hence, we may think of $H_1(\Sigma, \al^-)$ as $H_1(\Sigma, \al^+)^*$.  

More generally, we may trade out $A^+$ for $\al^+$ and $A^-$ for $\al^-$ in practically every homology group we mention throughout; we generally will do so without mention going forth.

\subsection{Dividing sets and homology orientations}

We next recall the notion of a \emph{dividing set}.

\begin{defi}
\label{defi:divset}
A \emph{dividing set} $K$ on a sutured surface $(\Sigma, \mb{F})$ is a properly embedded, oriented 1-dimensional submanifold with $\partial K = F$, such that:

\begin{itemize}

\item each component of $\Sigma \setminus K$ can be designated as \emph{positive} or \emph{negative}, with the two components adjacent to any component of $K$ having different signs; 

\item if $R^+(K)$ is the union of the closures of the positive components, then we require $\partial R^+(K) = A^+ \cup K$ as oriented manifolds; 

\item and with $R^-(K)$ similarly defined, we require $\partial R^-(K) = A^- \cup -K$.

\end{itemize}
\end{defi}

We recall a crucial property of certain dividing sets.

\begin{defi}
An \emph{isolated component} of a dividing set $K$ on $(\Sigma, \mb{F})$ is a component of $\Sigma \setminus K$ that has trivial intersection with $\partial \Sigma$; and $K$ is \emph{non-isolating} if it has no isolated components.
\end{defi}

We characterize this condition.  To state this, let 
$$L(K) = n(\mb{F}) - \chi\paren{R^+(K)};$$
also, let $I^+(K)$ be the number of isolated components of $R^+(K)$.

\begin{prop}
\label{prop:isocomps}
For a dividing set $K$ on $(\Sigma, \mb{F})$, the rank of $H_1(R^+(K), \al^+)$ is $L(K) + I^+(K)$.  In particular, $R^+(K)$ has no isolated components if and only if the rank of $H_1(R^+(K), \al^+)$ is $L(K)$.  Also, $R^-(K)$ has no isolated components if and only the inclusion map $H_1(R^+(K), \al^+) \rightarrow H_1(\Sigma, \al^+)$ is injective.   
\end{prop}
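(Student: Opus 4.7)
I would prove the three assertions in order.

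The first is a direct analogue of Lemma~\ref{lem:lrank}, replacing $(\Sigma, \al^+)$ with $(R^+(K), \al^+)$. I would run the same tail of the long exact sequence
\[ H_1(\al^+) \to H_1(R^+(K)) \to H_1(R^+(K), \al^+) \to H_0(\al^+) \to H_0(R^+(K)) \to H_0(R^+(K), \al^+) \to 0 \]
and take the alternating sum of ranks. The one new input is that the rank of $H_0(R^+(K), \al^+)$ equals $I^+(K)$: since $A^+ \subseteq R^+(K)$ and $R^+(K) \cap \partial\Sigma = A^+$, each component of $A^+$ lies in a unique component of $R^+(K)$ and carries exactly one vertex of $\al^+$, so a component of $R^+(K)$ contains a vertex of $\al^+$ iff it meets $\partial \Sigma$ iff it is not isolated. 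The resulting rank $n(\mb{F}) - \chi(R^+(K)) + I^+(K) = L(K) + I^+(K)$ proves the first statement; the second is an immediate consequence.

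For the third assertion, I would use the long exact sequence of the triple $(\Sigma, R^+(K), \al^+)$,
\[ H_2(\Sigma,\al^+) \to H_2(\Sigma, R^+(K)) \xrightarrow{\partial} H_1(R^+(K), \al^+) \xrightarrow{i_*} H_1(\Sigma, \al^+). \]
Since $\Sigma$ has no closed components, $H_2(\Sigma, \al^+) = 0$, so injectivity of $i_*$ is equivalent to the vanishing of $\partial$. Excising $\mathrm{int}(R^+(K))$ identifies $H_2(\Sigma, R^+(K))$ with $H_2(R^-(K), K)$; by the long exact sequence of the pair (using $H_2(R^-(K)) = 0$), or equivalently by Poincar\'e--Lefschetz duality, this group is free of rank $I^-(K)$, generated by the relative fundamental classes of the isolated components of $R^-(K)$. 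Hence $I^-(K) = 0$ already gives $\partial = 0$ and injectivity of $i_*$.

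For the converse, an isolated component $C$ of $R^-(K)$ maps under $\partial$ to $[\partial C] \in H_1(R^+(K), \al^+)$, and I must show this class is nonzero. Since $H_1(\al^+) = 0$ the natural map $H_1(R^+(K)) \to H_1(R^+(K), \al^+)$ is injective, so it suffices to detect $[\partial C]$ in $H_1(R^+(K))$. For this I would apply Mayer--Vietoris to $\Sigma = R^+(K) \cup R^-(K)$, which intersect in $K$: because $H_2(\Sigma) = 0$, the map $H_1(K) \to H_1(R^+(K)) \oplus H_1(R^-(K))$ is injective. Now $\partial C$ is a nonempty disjoint union of closed components of $K$ — it avoids $\partial \Sigma$ since $C$ is isolated, and is nonempty because otherwise $C$ would be a closed component of $\Sigma$ — so $[\partial C]$ is a nontrivial signed sum of distinct generators of $H_1(K)$. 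Its image in $H_1(R^-(K))$ vanishes (as $\partial C$ bounds $C$ there), forcing its image in $H_1(R^+(K))$ to be nonzero. I expect the delicate point to be pinning down the generators of $H_2(\Sigma, R^+(K))$ precisely; once that identification is in hand, the Mayer--Vietoris step is essentially formal.
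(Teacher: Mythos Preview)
Your proof is correct and matches the paper's approach almost exactly: the rank formula via the long exact sequence of the pair $(R^+(K),\al^+)$, and the injectivity statement via the long exact sequence of the triple $(\Sigma, R^+(K), \al^+)$ together with excision, are precisely what the paper does.

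The only divergence is in the converse of the third assertion. The paper finishes in one stroke by applying Poincar\'e duality $H_2(R^-(K), K) \cong H^0(R^-(K), \al^-)$ (which you mention as an option) and observing that this group vanishes iff $I^-(K)=0$; since $H_2(\Sigma,\al^+)=0$ already forces $\partial$ to be \emph{injective}, both implications follow immediately from whether the domain of $\partial$ is zero. Your Mayer--Vietoris argument showing $[\partial C]\neq 0$ is correct, but it is doing work you have already done: once you know $\partial$ is injective and its domain has rank $I^-(K)$, there is nothing left to check. So your ``delicate point'' is not actually delicate---you can drop the entire Mayer--Vietoris paragraph.
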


\begin{proof}
For the first claim, we examine part of the long exact sequence of the pair $(R^+(K), \al^+)$,
$$ H_1(\al^+) \rightarrow H_1\paren{R^+(K)}  \rightarrow H_1(R^+(K), \al^+)  \rightarrow $$
$$ H_0(\al^+) \rightarrow H_0\paren{R^+(K)}  \rightarrow H_0(R^+(K), \al^+) \rightarrow 0. $$
Let $r$ be the rank of $H_1(R^+(K), \al^+)$.  The first group $H_1(\al^+)$ is trivial; the rank of $H_0(R^+(K), \al^+)$ is $I^+(K)$.  Then, we have
$$ b_1\paren{R^+(K)} + n(\mb{F}) + I^+(K) = r + b_0\paren{R^+(K)}. $$
Since $R^+(K)$ has no closed components, $\chi\paren{R^+(K)} = b_0\paren{R^+(K)} - b_1\paren{R^+(K)}$.  Therefore, $r = n(\mb{F}) - \chi\paren{R^+(K)} + I^+(K)$. 

For the claim about negative isolated components, we examine part of the long exact sequence of the triple $(\Sigma, R^+(K), \al^+)$,
$$ H_2(\Sigma, \al^+) \rightarrow H_2\paren{\Sigma, R^+(K)}  \rightarrow H_1(R^+(K), \al^+) \rightarrow H_1(\Sigma, \al^+). $$
Since $\Sigma$ has no closed components, $H_2(\Sigma, \al^+)$ is trivial.  As to $H_2\paren{\Sigma, R^+(K)}$, we have
$$ H_2\paren{\Sigma, R^+(K)} \cong H_2(R^-(K), K) \cong H^0(R^-(K), \al^-);$$
the first isomorphism is excision, and the second is Poincar\'{e} duality.  Therefore, this term vanishes if and only if $R^-(K)$ has no isolated components; this is equivalent to injectivity of the map from $H_1(R^+(K), \al^+)$ to $H_1(\Sigma, \al^+)$.
\end{proof}

\subsection{Gluing}
\label{sec:glue}

We now consider the operation of gluing.  

\begin{defi}
\label{defi:gluing}
Let $(\Sigma, \mb{F})$ be a sutured surface. A \emph{gluing} on $(\Sigma, \mb{F})$ is an orientation-reversing map $\tau: \gamma \rightarrow \gamma'$ of two (possibly disconnected) submanifolds $\gamma, \gamma'$ of $\partial \Sigma$, such that:

\begin{itemize}

\item $\gamma$ and $\gamma'$ each have boundary  (if any) in $\al$;  

\item $\tau$ maps $\al^+ \cap \gamma$ bijectively to $\al^+\cap \gamma'$, $\al^- \cap \gamma$ bijectively to $\al^-\cap \gamma'$, $F^+ \cap \gamma$ bijectively to $F^-\cap \gamma'$, and $F^- \cap \gamma$ bijectively to $F^+\cap \gamma'$, all without fixed points; 

\item $\tau$ restricted to $\gamma \setminus \al$ is a diffeomorphism onto its image; and

\item the surface $\Sigma_{\tau} = \Sigma/\sim$ has no closed components, where $\sim$ identifies $x \in \gamma$ with $\tau(x) \in \gamma'$. 
\end{itemize}

\end{defi}

Given a gluing $\tau$ on a sutured surface $(\Sigma, \mb{F})$, let $\phi_{\tau}: \Sigma \rightarrow \Sigma_{\tau}$ denote the quotient map.  As a consequence of our definition, $\Sigma_{\tau}$ will inherit sutures $F^{\pm}_{\tau}$ and vertices $\al^{\pm}_{\tau}$ -- those images of points in $F^{\pm}$ and $\al^{\pm}$ that lie on $\partial \Sigma_{\tau}$ -- ordered as in Definition \ref{def:sutsur}.  

\begin{defi}
For a gluing $\tau$ on $(\Sigma, \mb{F})$, the \emph{glued surface of $\tau$} is the sutured surface $(\Sigma_{\tau}, \mb{F}_{\tau})$, with $\mb{F}_{\tau} = (F^+_{\tau}, F^-_{\tau}, \al^+_{\tau}, \al^-_{\tau})$.
\end{defi}

Note that the sutures on $\gamma$ and $\gamma'$ will be mapped into the interior of $\Sigma_{\tau}$, and so their images will \emph{not} be part of $F^{\pm}_{\tau}$.  Likewise, the images of the vertices in the interior of $\gamma$ and $\gamma'$ will not be vertices in $\al^{\pm}_{\tau}$.
 
Given a dividing set $K$ on $(\Sigma, \mb{F})$, let $K_{\tau}$ denote the image of $K$ under $\phi_{\tau}$.  It is easy to see that $K_{\tau}$ is a dividing set meeting the demands of Definition \ref{defi:divset}.  

\subsection{Examples}

Figure \ref{fig:disk} depicts a sutured disk.  We set some notation for these which we will use throughout.  Given a positive integer $N$, let $(D^2, \mb{F}(N))$ be a sutured disk with $n(\mb{F}(N)) = N$. Starting on a positive suture, we will typically number the points we encounter going counterclockwise by $F_1, \al_1, F_2, \al_2, \ldots, F_{2N}, \al_{2N}$.  The odd numbers correspond to positive sutures and vertices, while the even numbers correspond to negative ones.  Take as a basis of $H_1(D^2, \al^+)$ the arcs $\beta_1, \beta_3, \ldots, \beta_{2N-3}$, where $\partial \beta_i = \al_{i+2} - \al_i$ (i.e., a representative is an arc oriented from $\al_i$ to $\al_{i+2}$). 

As in Figure \ref{fig:disk}, we will mark all our positive vertices by X's, and all our negative vertices by O's.  

\begin{figure}[!htbp]
	\centering
		\includegraphics[width=0.36\textwidth]{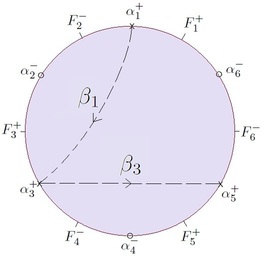}
	\caption{The sutured disk $\paren{D^2, \mb{F}(3)}$, with two arcs generating $H_1(D^2, \al^+)$.}
	\label{fig:disk}
\end{figure}

In Figure \ref{fig:cutopen}, we depict a gluing of a sutured disk.  The map $\tau$ identifies the two vertical parts of the boundary in the obvious manner, to produce a sutured annulus, while $\phi_{\tau}$ is the quotient map between surfaces.  Notice that the two top X's become one X in the glued surface, and likewise for the two bottom X's; meanwhile, the other three vertices on each side are ``swallowed'' into the interior of the glued surface.  In this figure, we also show a glued dividing set.   

\begin{figure}[!htbp]
	\centering
		\includegraphics[width=0.8\textwidth]{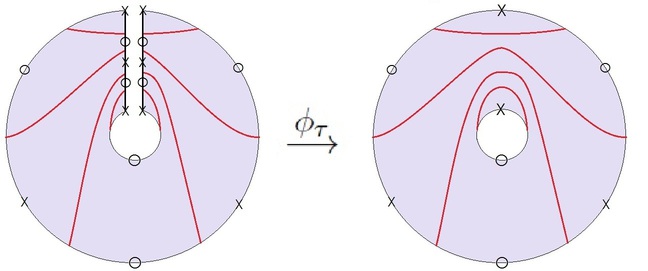}
	\caption{A gluing of sutured surfaces.}
	\label{fig:cutopen}
\end{figure}

\section{The Exterior TQFT Assignments}
\label{sec:assign}

Referring back to Definition \ref{defi:quote}, we now define our notions of sutured surface group, contact subset, and gluing morphism.  In fact, our group is an algebra.  In addition, we have reasonable ways to deal with the sign ambiguities of  contact elements and gluing morphisms.

After reviewing our algebraic setting and notation, this section just introduces the cast of characters: in the next section, we will precisely state what is expected of a sutured TQFT, and then show that our assignments form such a theory.

\subsection{Exterior algebras and notation}
\label{sec:extnot}

Let $M$ be a free $\Z$-module of rank $r$.  We will denote its exterior algebra by $\Lambda(M)$, with $\Lambda^i(M)$ the $i^{\mbox{\scriptsize{th}}}$ exterior grading. We will write $\Lambda^{top}(M)$.  Recall that $\Lambda(M_1 \oplus M_2)$ is canonically isomorphic to $\Lambda(M_1) \otimes \Lambda(M_2)$.

As usual, we have a pairing $\langle \cdot | \cdot \rangle: \Lambda\paren{M^*} \otimes \Lambda\paren{M} \rightarrow \Z$, defined on decomposable elements $f_1 \wedge \ldots \wedge f_k \in \Lambda^k(M^*)$ and $e_1 \wedge \ldots \wedge e_k \in \Lambda^k(M)$ by
$$ \langle f_1 \wedge \ldots \wedge f_k | e_1 \wedge \ldots \wedge e_k \rangle = \mbox{det}\paren{f_i(e_j)} $$
(and with $\langle F|E\rangle = 0$ if $F$ and $E$ belong to different exterior gradings).  It is easy to see that this pairing is well-defined, bilinear, and perfect in the sense that it effects an isomorphism of $\Lambda\paren{M^*}$ with $\Lambda\paren{M}^*$.  Since $(M^*)^*$ is canonically isomorphic to $M$, we of course have a pairing $\langle \cdot | \cdot \rangle: \Lambda\paren{M} \otimes \Lambda\paren{M^*} \rightarrow \mathbb{Z}$.

The pairings also allow for the interior product operation: for $E \in \Lambda\paren{M}$ and $F \in \Lambda\paren{M^*}$, let $\iota_E F \in \Lambda\paren{M^*}$ be defined by 
$$ \langle \iota_E F | \cdot \rangle = \langle F | E \wedge \cdot \rangle. $$
Again taking advantage of $(M^*)^* \cong M$, we also have an element $\iota_F E \in \Lambda\paren{M}$. 

Given an element $e \in M$, recall that the map given by wedging with $e$ turns $\Lambda(M)$ into an acyclic chain complex.  For an element $f \in M^*$, $\iota_f$ also turns $\Lambda(M)$ into an acyclic chain complex. 

If $f:M \rightarrow M'$ is a homomorphism, $f$ induces a homomorphism of exterior algebras, which we will also denote by $f$. 

We denote the set of integers $\{i, i+1, \ldots, j\}$ by $[i,j]$.
Given a basis $\{b_1, \ldots, b_r\}$ of $M$, and a subset $I = \{i_1,  \ldots, i_m\}$ of $[1,r]$ with $i_1 < i_2 < \ldots < i_m$, we write $b_I$ for $b_{i_1} \wedge \ldots \wedge b_{i_m}$.

\subsection{The sutured surface algebra}

\begin{defi}
\label{defi:sutalg}
For a ring $R$, the \emph{sutured surface algebra over $R$} for $(\Sigma, \mb{F})$ is
$$V(\Sigma, \mb{F}; R)^X = \Lambda\paren{H_1(\Sigma, \al^+;R)};$$
we let $V^i(\Sigma, \mb{F};R)^X$ denote $\Lambda^i\paren{H_1(\Sigma, \al^+;R)}$.
\end{defi}

We will usually drop the base ring from the notation, which will be understood to be $\Z$ or $\Z_2$ (the latter in the last two sections), and will always drop the $X$ (for ``exterior'') except when we compare our theory with the Sutured Floer Homology TQFT of \cite{HKM}, in Section \ref{sec:tqft}. 

The algebra $V(\Sigma, \mb{F})^*$ is identified with $$\Lambda\paren{H_1(\Sigma, \al^+)^*} = \Lambda\paren{H^1(\Sigma, \al^+)}.$$
As we noticed in Section \ref{sec:sutsur}, this may further be identified with  $\Lambda\paren{H_1(\Sigma, \al^-)}$.  

\subsection{Contact elements}

Before defining contact elements, we make a few more definitions.
When working over $\Z$, the theory on which we base our work suffers from unremovable sign ambiguities.  To cope with these, we have a strengthened notion of dividing sets.

\begin{defi}
A \emph{homology-oriented dividing set} $\veco{K}$ on a sutured surface is a pair $(K, \omega)$ where $K$ is a dividing set and $\omega$ is a choice of generator of $\Lambda^{top}\paren{H_1(R^+(K), \al^+; \Z)}$.
\end{defi}

To a dividing set $K$, there are two associated oriented dividing sets.  

Let $\pi_i: V(\Sigma, \mb{F}) \rightarrow V(\Sigma, \mb{F})$ denote the projection $V(\Sigma, \mb{F}) \rightarrow V^i(\Sigma, \mb{F})$, followed by the inclusion of $V^i(\Sigma, \mb{F})$ into $V(\Sigma, \mb{F})$.  (So, $\pi_i$ simply picks out the homogeneous part of an element in grading $i$.) Also, recall that $L(K) = n(\mb{F}) - \chi\paren{R^+(K)}$.

Finally, let $i_K$ denote the inclusion of $(R^+(K), \al^+)\hookrightarrow (\Sigma, \al^+)$, inducing a map 
$${i_K}_*:\Lambda\paren{H_1\paren{R^+(K), \al^+}} \rightarrow \Lambda\paren{H_1\paren{\Sigma, \al^+}} = V(\Sigma, \mb{F}).$$

\begin{defi}
\label{defi:contelt}
The \emph{contact element} of the oriented dividing set $\veco{K} = (K, \omega)$ on $(\Sigma, \mb{F})$ is given by
$$ c^X(\veco{K}) = \pi_{L(K)}\circ {i_K}_*(\omega) \in V(\Sigma, \mb{F})^X.$$
The \emph{contact subset} of the (unoriented) dividing set $K$, $[c^X(K)]$, is the set (of order 1 or 2)
$$ [c^X(K)] = \{c^X(\veco{K}) | \veco{K} \mbox{ is an oriented dividing set associated to }K \}.$$
\end{defi}

Again, we generally drop the $X$ from the notation. 
Note that if we work over $\Z_2$, we have no need for oriented dividing sets or contact subsets, and simply speak of contact elements (defined the same way) without ambiguity.

We immediately have the following, which explains some of the definition.

\begin{prop}
\label{prop:nontriv}
The contact element $c(\veco{K})$ is non-trivial if and only $K$ is non-isolating.
\end{prop}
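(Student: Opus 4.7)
The plan is to analyze $c(\veco{K})$ grading by grading and exploit Proposition \ref{prop:isocomps}. Set $m = \text{rank}\, H_1(R^+(K), \al^+)$, which by Proposition \ref{prop:isocomps} equals $L(K) + I^+(K)$. The chosen $\omega$ generates $\Lambda^{top}(H_1(R^+(K), \al^+))$, so $\omega$ lives in exterior grading $m$, and consequently ${i_K}_*(\omega)$ lives in grading $m$ inside $V(\Sigma, \mb{F})$.

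First I would handle the case $I^+(K) > 0$: here $m > L(K)$, and since $\pi_{L(K)}$ kills everything outside grading $L(K)$, we get $c(\veco{K}) = 0$ immediately. Next, assuming $I^+(K) = 0$, we have $m = L(K)$, and $\pi_{L(K)}$ acts as the identity on $\Lambda^{L(K)}(H_1(\Sigma, \al^+))$, so that $c(\veco{K}) = {i_K}_*(\omega)$. It remains to determine when this element is nonzero.

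The key algebraic observation is that for a homomorphism $f \colon A \to B$ of finitely generated free abelian groups with $A$ of rank $m$, the induced map $\Lambda^m f \colon \Lambda^m A \to \Lambda^m B$ is nonzero if and only if $f$ is injective. Indeed, after choosing a basis $e_1, \ldots, e_m$ of $A$, the generator $e_1 \wedge \cdots \wedge e_m$ maps to $f(e_1) \wedge \cdots \wedge f(e_m)$; since $\Lambda^m B$ is torsion-free and embeds in $\Lambda^m (B \otimes \mathbb{Q})$, this wedge is nonzero exactly when $f(e_1), \ldots, f(e_m)$ are $\mathbb{Q}$-linearly independent, which is exactly injectivity of $f$. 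Applying this to $f = {i_K}_*$ on $H_1$, the element $c(\veco{K}) = {i_K}_*(\omega)$ is nonzero iff the inclusion $H_1(R^+(K), \al^+) \to H_1(\Sigma, \al^+)$ is injective; by the second part of Proposition \ref{prop:isocomps}, this is equivalent to $I^-(K) = 0$.

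Combining: under the standing assumption $I^+(K) = 0$, we get $c(\veco{K}) \neq 0$ iff $I^-(K) = 0$; and if $I^+(K) > 0$ we already have $c(\veco{K}) = 0$. So $c(\veco{K}) \neq 0$ iff $I^+(K) = I^-(K) = 0$, i.e., iff $K$ is non-isolating. The only mildly subtle point — and the one I would state as a lemma or at least spell out carefully — is the integral vanishing criterion for $\Lambda^m f$; everything else is bookkeeping with the gradings and a direct appeal to the previously established Proposition \ref{prop:isocomps}.
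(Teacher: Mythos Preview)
Your argument is correct and follows essentially the same approach as the paper: both proofs reduce to Proposition~\ref{prop:isocomps}, using the grading of ${i_K}_*(\omega)$ to detect $I^+(K)>0$ and injectivity of ${i_K}_*$ on $H_1$ to detect $I^-(K)>0$. The only differences are cosmetic: you treat the $I^+$ case first and then the $I^-$ case, whereas the paper does the reverse, and you spell out the elementary fact that $\Lambda^m f$ vanishes on a generator iff $f$ fails to be injective, which the paper leaves implicit.
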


\begin{proof}
The term ${i_K}_*(\omega)$ is non-trivial if and only if the inclusion $H_1\paren{R^+(K), \al^+} \rightarrow H_1(\Sigma, \al^+)$ is injective; according to Proposition \ref{prop:isocomps}, this occurs precisely when there are no isolated components of $R^-(K)$.  In this case, $c(\veco{K})$ is non-trivial exactly when ${i_K}_*(\omega)$ lies in exterior grading $L(K)$, and Proposition \ref{prop:isocomps} says that this occurs precisely when there are no isolated components of $R^+(K)$.
\end{proof}

\begin{remark}
Note that ${i_K}_*(\omega)$ is homogeneous with respect to the exterior grading, and so the projection $\pi_{L(K)}$ either preserves this element or takes it to $0$.  Essentially, the projection is part of the definition to artifically ensure that Proposition \ref{prop:nontriv} is true; of course, even if we had left out the projection, $c(\veco{K})$ would still be trivial when $R^-(K)$ had isolated components.
\end{remark}

We have biased all the above towards referring to $\al^+$ rather than $\al^-$, merely for the sake of definiteness.  We could imagine defining an element $c^-(\veco{K})$ in an exactly similar fashion.  It turns out that this is essentially equivalent information, but ``dualized'' in a sense; we treat this in Section \ref{sec:negative}.

\begin{figure}[!htbp]
	\centering
		\includegraphics[width=0.4\textwidth]{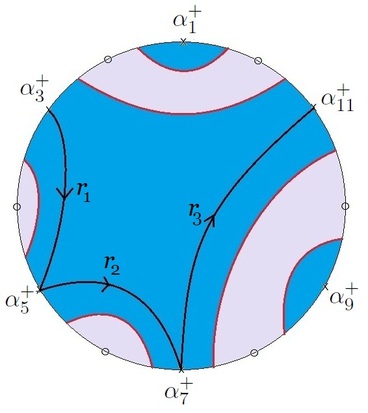}
	\caption{A dividing set $K$ on $\paren{D^2, \mb{F}(6)}$.}
	\label{fig:diskdiv}
\end{figure}

\subsection{Gluing morphisms}

The definition of gluing morphism is a bit trickier: note that in general, the image of $\al^+$ under $\phi_{\tau}$ includes some ``swallowed'' vertices in the interior of $\Sigma_{\tau}$.  In particular, $\phi_{\tau}$ induces a map from $H_1(\Sigma, \al^+)$ to $H_1\paren{\Sigma_{\tau}, \phi_{\tau}(\al^+)}$, rather than to $H_1\paren{\Sigma_{\tau}, \al^+_{\tau}}$, and so we cannot directly define gluing maps by the induced maps on homology.  However, $H_1(\Sigma_{\tau}, \al^+)$ can be thought of as a subgroup of $H_1\paren{\Sigma_{\tau}, \phi_{\tau}(\al^+)}$. On the level of exterior algebras, we can follow ${\phi_{\tau}}_*$ with an interior product which cuts away classes with boundary including swallowed vertices, and leaves an element in the appropriate subalgebra of $\Lambda\paren{H_1\paren{\Sigma_{\tau}, \phi_{\tau}(\al^+)}}$.

To be specific, we first further specify the notion of gluing.   
Given a gluing $\tau$ on $(\Sigma, \mb{F})$, consider the long exact cohomology sequence of the triple $(\Sigma_{\tau}, \phi_{\tau}(\al^+), \al^+_{\tau})$.  It is easy to see (noting the last demand of Definition \ref{defi:gluing}) that the coboundary map
$$ \delta: H^0(\phi_{\tau}(\al^+), \al^+_{\tau}) \rightarrow H^1(\Sigma_{\tau}, \phi_{\tau}(\al^+)) $$
is injective.  

\begin{defi}
An \emph{oriented gluing} $\veco{\tau}$ on a sutured surface is a pair $(\tau, \eta)$ where $\tau$ is an (unoriented) gluing and $\eta$ is a generator of $\Lambda^{top}\paren{\mbox{\emph{Im} }\delta}$.
\end{defi}

\begin{lemma}
\label{lem:images}
Let $\veco{\tau} = (\tau, \eta)$ be an oriented gluing on $(\Sigma, \mb{F})$. 
Let $j: H_1(\Sigma_{\tau}, \al^+_{\tau}) \rightarrow H_1\paren{\Sigma_{\tau}, \phi_{\tau}(\al^+)}$ denote the map induced by inclusion.  Then $j$ is injective. 

If 
$$\iota_{\eta}: \Lambda\paren{H_1\paren{\Sigma_{\tau}, \phi_{\tau}(\al^+)}} \rightarrow
\Lambda\paren{H_1\paren{\Sigma_{\tau}, \phi_{\tau}(\al^+)}} $$
is the interior product map, then
$$\mbox{\emph{Im} }\iota_{\eta} = \Lambda\paren{\mbox{\emph{Im} }j}$$
(the latter being a subalgebra of $\Lambda\paren{H_1(\Sigma_{\tau}, \phi_{\tau}(\al^+)}$).
\end{lemma}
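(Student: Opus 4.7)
The plan is to use the long exact sequences of the triple $(\Sigma_\tau, \phi_\tau(\alpha^+), \alpha^+_\tau)$, in both homology and cohomology, to identify the algebraic structure, and then to pick a basis of $H_1(\Sigma_\tau, \phi_\tau(\alpha^+))$ adapted to $\mathrm{Im}\, j$, in which $\eta$ becomes a pure wedge of dual basis vectors; the interior-product claim then follows by a direct computation.

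First, I would establish the injectivity of $j$. The relevant segment of the triple long exact sequence is
\begin{equation*}
H_1(\phi_\tau(\alpha^+), \alpha^+_\tau) \to H_1(\Sigma_\tau, \alpha^+_\tau) \xrightarrow{j} H_1(\Sigma_\tau, \phi_\tau(\alpha^+)) \xrightarrow{\partial} H_0(\phi_\tau(\alpha^+), \alpha^+_\tau),
\end{equation*}
and the leftmost group vanishes because $\phi_\tau(\alpha^+)$ is $0$-dimensional. By exactness, $\mathrm{coker}\, j \cong \mathrm{Im}\, \partial$ embeds in the free abelian group $H_0(\phi_\tau(\alpha^+), \alpha^+_\tau)$, so $N := \mathrm{Im}\, j$ is a direct summand of $M := H_1(\Sigma_\tau, \phi_\tau(\alpha^+))$.

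Next, I would identify $W := \mathrm{Im}\, \delta$ with the annihilator $N^\perp \subset M^*$. Because every component of $\Sigma_\tau$ meets $\alpha^+_\tau$, the $H_0$-groups of both pairs are free, so the universal coefficient theorem produces natural isomorphisms $H^1 \cong H_1^*$; under these, the cohomology map $j^*$ becomes the algebraic dual of $j$. The cohomology long exact sequence of the triple, which is exact at $H^1(\Sigma_\tau, \phi_\tau(\alpha^+))$, then yields $W = \ker j^* = N^\perp$.

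Finally, I would pick a basis $e_1, \ldots, e_s$ of $N$ and extend to a basis $e_1, \ldots, e_r$ of $M$. In the dual basis $f_1, \ldots, f_r$, one has $N^\perp = \langle f_{s+1}, \ldots, f_r\rangle$, so $\eta = \pm\, f_{s+1} \wedge \cdots \wedge f_r$. Unwinding the definition of $\iota_\eta$ on monomials $e_I$ (for $I \subset [1,r]$) gives $\iota_\eta(e_I) = \pm\, e_{I \setminus \{s+1, \ldots, r\}}$ when $\{s+1, \ldots, r\} \subset I$, and $0$ otherwise; its image is precisely the subalgebra $\Lambda(N) = \Lambda(\mathrm{Im}\, j)$. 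The main point requiring care is the duality step: the identification $H^1 \cong H_1^*$ must be natural with respect to the maps in the long exact sequence, which requires the relevant $H_0$-groups to be torsion-free. This is supplied by the sutured-surface hypotheses (no closed components of $\Sigma_\tau$, and every boundary component meets $\alpha^+_\tau$).
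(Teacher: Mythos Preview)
Your proposal is correct and follows essentially the same route as the paper: both use the long exact sequence of the triple $(\Sigma_\tau,\phi_\tau(\alpha^+),\alpha^+_\tau)$ to obtain injectivity of $j$ and the direct-summand property of $\mathrm{Im}\,j$, then choose an adapted basis so that $\eta$ is a top wedge of the complementary dual vectors, and finish by the same monomial computation of $\iota_\eta$. You are somewhat more explicit than the paper about the duality step (identifying $\mathrm{Im}\,\delta$ with $(\mathrm{Im}\,j)^\perp$ via the universal coefficient theorem and the cohomology sequence), whereas the paper simply asserts this after observing the homology sequence splits; note, incidentally, that $H_0$ of any pair is automatically free over $\Z$, so your caveat about torsion-freeness is not actually needed.
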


\begin{proof}
Consider the long exact homology sequence of $(\Sigma_{\tau}, \phi_{\tau}(\al^+), \al^+_{\tau})$, which reads as
$$ 0 \rightarrow H_1(\Sigma_{\tau}, \al^+_{\tau}) \overset{j}{\rightarrow} H_1(\Sigma_{\tau}, \phi_{\tau}(\al^+)) \rightarrow H_0(\phi_{\tau}(\al^+), \al^+_{\tau}) \rightarrow 0.$$
The injectivity of $j$ is immediate; the image of $j$ consists of those classes of multicurves whose boundary lies in $\al^+_{\tau}$.  Since all the groups are free, this short exact sequence splits, and $\mbox{Im }j$ is exactly the subspace of $H_1\paren{\Sigma_{\tau}, \phi_{\tau}(\al^+)}$ annhilated by $\mbox{Im }\delta$.  

Choose a basis of $H_1\paren{\Sigma_{\tau}, \phi_{\tau}(\al^+)}$,$\{b_1, \ldots, b_k, b_{k+1}, \ldots, b_{\ell}\}$, such that $\{b_1, \ldots, b_k\}$ is a basis of $\mbox{Im }j$; also let $\{c_1, \ldots, c_{\ell}\}$ be the dual basis of $H^1\paren{\Sigma_{\tau}, \phi_{\tau}(\al^+)}$.
Of course, $\{c_{k+1}, \ldots, c_{\ell}\}$ is a basis of $\mbox{Im }\delta$, so $\eta = \pm c_{[k+1, \ell]}$.  It is not hard to see that 
$$\iota_{c_{[k+1,\ell]}}b_I = \left\{ \begin{array}{ll} \pm b_{I \setminus [k+1,\ell]}, &\mbox{ if }[k+1,\ell] \subset I\\ 0, &\mbox{ otherwise. }\end{array}\right.$$
From this, we can readily see that $\Lambda\paren{\mbox{Im }j}$ is exactly the image of $\iota_{\eta}$. 
\end{proof}

By Lemma \ref{lem:images}, the following definition makes sense.

\begin{defi}
\label{defi:gluemorph}
Given an oriented gluing $\veco{\tau} = (\tau, \eta)$ on a sutured surface $(\Sigma, \mb{F})$, the \emph{gluing morphism} 
$$ \Phi^X_{\veco{\tau}}: V(\Sigma, \mb{F})^X \rightarrow V(\Sigma_{\tau}, \mb{F}_{\tau})^X $$
is defined by
$$ \Phi^X_{\veco{\tau}}(x) = \iota_{\eta} \paren{{\phi_{\tau}}_*(x)};$$
here we are of course identifying $\Lambda\paren{\mbox{\emph{Im} }j}$ with $V(\Sigma_{\tau}, \mb{F}_{\tau})^X$.

Given an unoriented gluing $\tau$, we also write $[\Phi^X_{\tau}]$ for the pair of maps $\Phi^X_{\veco{\tau}}$ for the two oriented gluings $\veco{\tau}$ associated to $\tau$.
\end{defi}

\subsection{Examples}
\label{sec:notex}

We examine some examples in detail.  In Figure \ref{fig:diskdiv}, we show a dividing set $K$ on $\paren{D^2, \mb{F}(6)}$.  (Recall the notation set in Section \ref{sec:sutsur}.)  The positive region $R^+(K)$ is shown in blue, and $H_1(R^+(K), \al^+)$ is generated by the three arcs $r_1, r_2, r_3$.  Let $\veco{K} = (K, r_1 \wedge r_2 \wedge r_3)$.  Then 
$$c(\veco{K}) = \beta_3 \wedge \beta_5 \wedge (\beta_7 + \beta_9).$$

\begin{figure}[!htbp]
	\centering
		\includegraphics[width=0.45\textwidth]{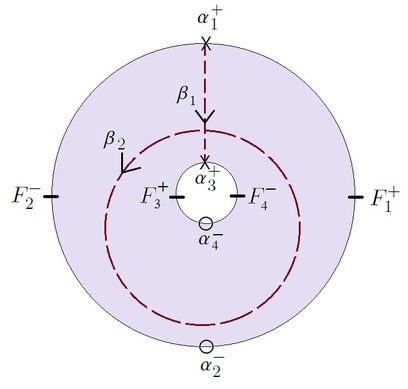}
	\caption{A sutured annulus $(S^1 \times D^1, \mb{F})$, with curves generating $H_1(S^1 \times D^1, \al^+)$.}
	\label{fig:annulus}
\end{figure}

In Figure \ref{fig:annulus}, we show a sutured annulus $(S^1 \times D^1, \mb{F})$, where $\mb{F}$ contains one positive vertex on each boundary component.  In this case, $H_1(S^1 \times D^1, \al^+)$ is generated by the arc $\beta_1$ and the core circle $\beta_2$ as shown.  Figure \ref{fig:annex} shows six examples of dividing sets on this annulus; with appropriately chosen homology orientations, we have
$$ c(\veco{K}_+) = 1,\,\,\, c(\veco{K}_-) = \beta_1 \wedge \beta_2,\,\,\,\, c(\veco{K}_0) = \beta_2,$$
$$ c(\veco{K}_1) = \beta_2,\,\,\, c(\veco{L}_0) = \beta_1,\,\,\,\, c(\veco{L}_1) = \beta_1 + \beta_2.$$

In Figure \ref{fig:glue2}, we show a disk $(D^2, \mb{F}(4))$ and a dividing set $K'$, such that if $\tau$ is the gluing which identifies the two vertical portions of the boundary, then $K'_{\tau}$ is the dividing set $K_-$ from above. Choosing a basis for homology as shown in the Figure, we have $c(\veco{K'}) = \gamma_1 \wedge \gamma_2 \wedge \gamma_3$ for one orientation on $K'$.   The map $\phi_{\tau}$ induces an isomorphism from $H_1(D^2, \al^+)$ to $H_1(S^1 \times D^1, \phi_{\tau}(\al^+))$, and we let $\gamma_1, \gamma_2, \gamma_3$ also denote a basis for the latter; then $\beta_1 = \gamma_1$ and $\beta_2 = \gamma_2 - \gamma_3$.   

An orientation on this gluing is supplied by $\eta$, where $\eta: H_1(S^1 \times D^1, \phi_{\tau}(\al^+)) \rightarrow \Z$ is given by evaluating the swallowed vertex on the boundary of a homology class -- or more simply, $\eta(\gamma_2) = \eta(\gamma_3) = 1$ and $\eta(\gamma_1)=0$.  Then,
$$\iota_{\eta}\paren{\gamma_1 \wedge \gamma_2 \wedge \gamma_3} = -\gamma_1 \wedge \gamma_3 + \gamma_1 \wedge \gamma_2 = \beta_1 \wedge \beta_2$$
as needed.

\begin{figure}[!htbp]
	\centering
		\includegraphics[width=0.65\textwidth]{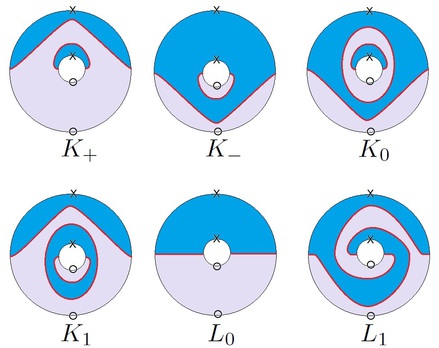}
	\caption{Six dividing sets on $(S^1 \times D^1, \mb{F})$.}
	\label{fig:annex}
\end{figure}

\begin{figure}[!htbp]
	\centering
		\includegraphics[width=0.4\textwidth]{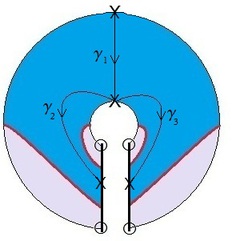}
	\caption{A dividing set $K'$ on $(D^2, \mb{F}(4))$, which induces $K_-$ when the two vertical portions of the boundary are identified.}
	\label{fig:glue2}
\end{figure}

\subsection{Gluing respects contact elements}

We address the axioms for a Sutured TQFT in the next section.  The heart of the notion is that the three assignments are coherent, in the following sense.

\begin{theorem}
\label{thm:respect}
If $\veco{K}$ is an oriented dividing set and $\veco{\tau}$ is an oriented gluing on a sutured surface $(\Sigma, \mb{F})$, then
$$ \Phi_{\veco{\tau}}\paren{c(\veco{K})} \in [c(K_{\tau})].$$
\end{theorem}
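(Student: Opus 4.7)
The plan is to exploit naturality. The quotient $\phi_\tau$ restricts to a surjection $R^+(K)\to R^+(K_\tau)$, giving a commutative square of pairs
$$
\begin{CD}
(R^+(K), \al^+) @>i_K>> (\Sigma, \al^+) \\
@V\phi_\tau|VV @VV\phi_\tau V \\
(R^+(K_\tau), \phi_\tau(\al^+)) @>i_{K_\tau}>> (\Sigma_\tau, \phi_\tau(\al^+))
\end{CD}
$$
and hence a commutative square of induced maps on the exterior algebras of the relative $H_1$ groups. Assuming first that $K$ is non-isolating, Proposition \ref{prop:isocomps} says $(i_K)_*(\omega)$ already lies in grading $L(K)$, so the projection $\pi_{L(K)}$ in the definition of $c(\veco{K})$ can be dropped; naturality of $(\phi_\tau)_*$ then gives
$$\Phi_{\veco{\tau}}(c(\veco{K})) \;=\; \iota_\eta\,(\phi_\tau)_*\,(i_K)_*(\omega) \;=\; \iota_\eta\,(i_{K_\tau})_*\,(\phi_\tau|_{R^+(K)})_*(\omega).$$

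To match this with $\pm c(\veco{K_\tau})$, I would invoke the projection-formula identity $\iota_\eta\circ(i_{K_\tau})_* = (i_{K_\tau})_*\circ\iota_{\eta^+}$, where $\eta^+\in H^1(R^+(K_\tau),\phi_\tau(\al^+))$ is the restriction of $\eta$ along $i_{K_\tau}$; this restriction carries all of $\eta$ because every swallowed positive vertex sits inside $R^+(K_\tau)$ (as $\al^+\subset A^+$). I would then rerun the basis-level computation in the proof of Lemma \ref{lem:images} internally on the pair $(R^+(K_\tau),\phi_\tau(\al^+))$ and its subpair $\al^+_\tau$, concluding that $\iota_{\eta^+}$ carries a top generator of $\Lambda^{top}(H_1(R^+(K_\tau),\phi_\tau(\al^+)))$ to $\pm$ a top generator $\omega_{K_\tau}$ of $\Lambda^{top}(H_1(R^+(K_\tau),\al^+_\tau))$. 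A short Mayer--Vietoris Euler-characteristic computation, $\chi(R^+(K_\tau))=\chi(R^+(K))-\chi(R^+(K)\cap\gamma)$ together with the bookkeeping for swallowed sutures and vertices, then shows both that $(\phi_\tau|_{R^+(K)})_*(\omega)$ itself sits in top grading and that $L(K)-s=L(K_\tau)$, where $s$ is the number of swallowed positive vertices. Combining these identifications gives $\Phi_{\veco{\tau}}(c(\veco{K})) = \pm(i_{K_\tau})_*(\omega_{K_\tau}) = \pm c(\veco{K_\tau})$, placing it in $[c(K_\tau)]$.

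The main obstacle, I expect, is handling the isolating cases. If $K$ itself is isolating, the isolated component of $R^\pm(K)$ is disjoint from $\partial\Sigma$, hence from $\gamma\cup\gamma'$, and descends unchanged to an isolated component of $R^\pm(K_\tau)$, so both sides of the identity vanish by Proposition \ref{prop:nontriv}. The subtler case is when $K$ is non-isolating but the gluing creates isolating components of $K_\tau$; here I would leverage Proposition \ref{prop:isocomps} -- either the kernel description when $R^-(K_\tau)$ becomes isolating (so $(i_{K_\tau})_*$ annihilates the top exterior power of $H_1(R^+(K_\tau),\al^+_\tau)$), or the grading jump when $R^+(K_\tau)$ becomes isolating (so the rank of the relevant target group shifts and $(\phi_\tau|_{R^+(K)})_*(\omega)$ no longer sits in top grading) -- to force $\Phi_{\veco{\tau}}(c(\veco{K}))=0$ on the nose, matching the trivial $[c(K_\tau)]=\{0\}$. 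The sign ambiguity in $[c(K_\tau)]$ spares us from tracking orientations precisely throughout.
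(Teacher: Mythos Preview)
Your approach is correct and essentially parallels the paper's own proof: both arguments hinge on the same commutative square, the fact that $(\phi_\tau|_{R^+(K)})_*: H_1(R^+(K),\al^+)\to H_1(R^+(K_\tau),\phi_\tau(\al^+))$ is an isomorphism when $K$ is non-isolating (established via the same Euler-characteristic bookkeeping), and a case analysis according to whether $K_\tau$ acquires isolated positive or negative regions. The one genuine difference in packaging is your use of the projection formula $\iota_\eta\circ(i_{K_\tau})_*=(i_{K_\tau})_*\circ\iota_{\eta^+}$ to push the interior product inside $R^+(K_\tau)$ before rerunning Lemma~\ref{lem:images}; the paper instead chooses a single adapted basis $\{b_1,\ldots,b_m\}$ of $H_1(\Sigma_\tau,\phi_\tau(\al^+))$ with nested sub-bases for $\mathrm{Im}\,j$ and for the image of $H_1(R^+(K_\tau),\al^+_\tau)$, and computes $\iota_\eta$ directly. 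Your route is a bit more conceptual; the paper's is more hands-on but avoids introducing $\eta^+$.

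One point to tighten: your stated reason for vanishing when $R^+(K_\tau)$ becomes isolating (``the rank of the relevant target group shifts and $(\phi_\tau|_{R^+(K)})_*(\omega)$ no longer sits in top grading'') is not quite the operative mechanism. In fact $(\phi_\tau|_{R^+(K)})_*(\omega)$ \emph{does} sit in top grading of $\Lambda H_1(R^+(K_\tau),\phi_\tau(\al^+))$ regardless, since that restricted map is an isomorphism whenever $K$ is non-isolating. What actually kills the output is that the coboundary $\delta^+:H^0(\phi_\tau(\al^+),\al^+_\tau)\to H^1(R^+(K_\tau),\phi_\tau(\al^+))$ acquires a kernel (coming from $H^0(R^+(K_\tau),\al^+_\tau)\cong\Z^{I^+(K_\tau)}$), so the pulled-back form $\eta^+$ is a wedge of linearly dependent classes and hence vanishes, making $\iota_{\eta^+}=0$. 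The paper phrases this same vanishing as the boundary map $H_1(R^+(K_\tau),\phi_\tau(\al^+))\to H_0(\phi_\tau(\al^+),\al^+_\tau)$ failing to be surjective. Once you correct this explanation, your argument goes through.
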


\begin{proof}

If $K$ is isolating, then so it $K_{\tau}$; in this case the result is clear.  So assume going forward that $K$ is non-isolating.

First we show that 
$$ {\phi_{\tau}}_*: H_1(R^+(K), \al^+) \rightarrow H_1\paren{R^+(K_{\tau}), \phi_{\tau}(\al^+)}$$
is an isomorphism.  Given a connected curve $\ell$ in $R^+(K_{\tau})$ with boundary in $\phi_{\tau}(\al^+)$, its preimage under $\phi_{\tau}$ will be a union of curves in $R^+(K)$ with boundary on $A^+$ (a boundary arc containing a positive vertex, as in Section \ref{sec:not}).  After a small isotopy of $\ell$, we can assume that its preimage actually lies in $\al^+$ itself.  So the map is surjective. 

To see that it is an isomorphism, we claim that the two groups have the same rank.  The rank of $H_1(R^+(K), \al^+)$ is $n(\mb{F}) - \chi\paren{R^+(K)} + I^+(K)$, as shown in Proposition \ref{prop:isocomps}.  Then, consider the exact sequence
$$0 \rightarrow H_1(R^+(K_{\tau})) \rightarrow H_1(R^+(K_{\tau}), \phi_{\tau}(\al^+)) \rightarrow H_0(\phi_{\tau}(\al^+)) \rightarrow H_0(R^+(K_{\tau})) \rightarrow H_0(\phi_{\tau}(\al^+)) \rightarrow 0.$$
Letting $r$ be the rank of the second group and $p$ be the number of pairs of points of $\al^+$ identified by the gluing, the ranks of the groups are $b_1\paren{R^+(K_{\tau})}, r, n(\mb{F}) - p, b_0\paren{R^+(K_{\tau})}, $ and $I^+(K)$ respectively. So
$$ r = n(\mb{F}) - \chi\paren{R^+(K_{\tau})} + I^+(K) - p.$$
Observe that each identified pair in $\al^+$ decreases the Euler characteristic by 1 from that of $R^+(K)$; thus, $r = n(\mb{F}) - \chi\paren{R^+(K)} + I^+(K)$ as needed.

Now, consider the diagram
$$\begin{CD}
H_1(R^+(K), \al^+) @>{\phi_{\tau}}_*>{\cong}> H_1\paren{R^+(K_{\tau}), \phi_{\tau}(\al^+)} @<<< H_1(R^+(K_{\tau}), \al^+_{\tau}) \\
@VVV   @VVV  @VVV\\
H_1(\Sigma, \al^+) @>{\phi_{\tau}}_*>> H_1\paren{\Sigma_{\tau}, \phi_{\tau}(\al^+)} @<<< H_1(\Sigma_{\tau}, \al^+_{\tau}) \\
\end{CD}$$
The lefthand commutative square shows that ${\phi_{\tau}}_*\paren{c(\veco{K})} \in \Lambda\paren{H_1\paren{\Sigma_{\tau}, \phi_{\tau}(\al^+)}}$ is equal to the image of a generator $\Lambda^{top}\paren{H_1\paren{R^+(K_{\tau}), \phi_{\tau}(\al^+)}}$.  Note also that the two horizontal maps in the righthand square are injective.  

The rightmost vertical map in the diagram is injective if and only if $K_{\tau}$ has no isolated negative components, by Proposition \ref{prop:isocomps}.  If this map is \emph{not} injective, since the horizontal maps in the righthand square are injective, the middle vertical map is not injective either. So in this case, ${\phi_{\tau}}_*\paren{c(\veco{K})}$ and hence $\Phi_{\veco{\tau}}\paren{c(\veco{K})}$ are zero, as we should have.  

Otherwise, let $\{b_1, \ldots, b_k, b_{k+1}, \ldots, b_{\ell}, b_{\ell+1}, \ldots, b_m\}$ be a basis of $H_1\paren{\Sigma_{\tau}, \phi_{\tau}(\al^+)}$ such that the first $k$ elements are a basis of the image of $H_1(R^+(K_{\tau}), \al^+_{\tau})$ and the first $\ell$ elements are a basis of $\mbox{Im }j$ (as in Lemma \ref{lem:images}).  Also, let $\{c_1, \ldots, c_m\}$ be the dual basis.
As in the proof of Lemma \ref{lem:images}, $\eta$ will be $\pm c_{[\ell+1,m]}$.  A representative for $[c(K_{\tau})]$ will be given by either $0$ or $b_{[1,k]}$, depending on whether or not $K_{\tau}$ has isolated positive components.    

Consider the exact sequence 
$$ 0 \raw H_1(R^+(K_{\tau}), \al^+_{\tau}) \raw H_1\paren{R^+(K_{\tau}), \phi_{\tau}(\al^+)} \raw H_0(\phi_{\tau}(\al^+), \al^+_{\tau})$$
$$ \raw H_0(R^+(K_{\tau}), \al^+_{\tau}) \raw H_0\paren{R^+(K_{\tau}), \phi_{\tau}(\al^+)} \raw 0. $$
The ranks of the last two groups are the number of isolated components of $R^+(K_{\tau})$ and $R^+(K)$, respectively; so, under the assumption that $K$ is non-isolating, the last term drops out.  If $K_{\tau}$ has isolated positive components, then the boundary map from $H_1\paren{R^+(K_{\tau}), \phi_{\tau}(\al^+)}$ to $H_0(\phi_{\tau}(\al^+), \al^+_{\tau})$ will certainly not be surjective.  Working in the above basis, it is easy to see that in this case, $\iota_{\eta}\paren{{\phi_{\tau}}_*\paren{c(\veco{K})}}$ will be trivial, as it should be.  

Finally, if $K_{\tau}$ (and hence $K$) are non-isolating, then the exact sequence above shortens to
$$  0 \raw H_1(R^+(K_{\tau}), \al^+_{\tau}) \raw H_1\paren{R^+(K_{\tau}), \phi_{\tau}(\al^+)} \raw H_0(\phi_{\tau}(\al^+), \al^+_{\tau}) \raw 0. $$
The image of $H_1(R^+(K_{\tau}), \al^+_{\tau})$ in $H_1\paren{\Sigma_{\tau}, \phi_{\tau}(\al^+)}$ will therefore be the span of the set \linebreak $\{b_1, \ldots, b_k, b_{\ell + 1}, \ldots, b_m\}$.  Working in the above basis, it is not hard to see that
$$ \iota_{\eta}\paren{{\phi_{\tau}}_*\paren{c(\veco{K})}} = \pm b_{[1,k]};$$
so $\Phi_{\veco{\tau}}\paren{c(\veco{K})}$ will be a representative of $[c(K_{\tau})]$.

\end{proof}

\section{Sutured TQFT, and the SFH and Exterior TQFTs}
\label{sec:tqft}

In this section, we make precise what we mean by a Sutured TQFT.  We show that our construction provides an example. After noting a few similarities of our construction with the Sutured Floer Homology version, we proceed to show that these construction are equivalent, in an appropriate sense.

\subsection{Axioms for Sutured TQFT}
We translate the axioms satisfied by the sutured quantum field theory of \cite{HKM} to the current setting, gently altering them to fit our setup.  Again, we work over $\Z$, with obvious simplifications if we work over $\Z_2$.

\begin{defi}
\label{defi:tqft}
A \emph{sutured topological quantum field theory} $\mathcal{F}$ assigns 

\begin{itemize}
\item[I.] to each sutured surface $(\Sigma, \mb{F})$, a free $\Z$-module $V(\Sigma, \mb{F})^{\mathcal{F}}$, the \emph{sutured surface group} of $(\Sigma, \mb{F})$;

\item[II.] to each dividing set $K$ on $(\Sigma, \mb{F})$, a subset $[c^{\mathcal{F}}(K)] \subset V(\Sigma, \mb{F})^{\mathcal{F}}$ of the form $\{c, -c\}$, the \emph{contact subset} of $K$;

\item[III.] to each gluing $\tau$ on $(\Sigma, \mb{F})$, a set of maps $[ \Phi^{\mathcal{F}}_{\tau}]$ from $V(\Sigma, \mb{F})^{\mathcal{F}}$ to $V(\Sigma_{\tau}, \mb{F}_{\tau})^{\mathcal{F}}$, of the form $\{\Phi, -\Phi\}$, the \emph{gluing morphism set} of $\tau$,
\end{itemize}

\noindent subject to the following conditions.  (In the following, we drop all $\mathcal{F}$ superscripts, and let $c(\cdot)$ and $\Phi_{\tau}$ denote either element of their respective sets.)

\begin{enumerate}

\item $V(\Sigma, \mb{F})$ supports a $\Z$-grading (the \emph{$\mbox{Spin}^c$-grading}) such that the homogeneous submodule in grading $L(\Sigma, \mb{F})-2i$ is free of rank $\left(\hspace{-6pt}\begin{array}{c} L(\Sigma, \mb{F}) \\ i \end{array}\hspace{-6pt}\right)$ for $0 \leq i \leq L(\Sigma, \mb{F})$, and trivial for other gradings. In particular, the rank of the entire module is $2^{L(\Sigma, \mb{F})}$.  

\item If $(\Sigma, \mb{F}) = (\Sigma_1, \mb{F}_1) \coprod (\Sigma_2, \mb{F}_2)$, then there exists an isomorphism
$$ \varphi: V(\Sigma_1, \mb{F}_1) \otimes V(\Sigma_2, \mb{F}_2) \rightarrow V(\Sigma, \mb{F})$$
such that for $K_i$ a dividing set on $(\Sigma, \mb{F}_i)$
$$ \varphi\left(c(K_1) \otimes c(K_2)\right) = \pm c(K_1 \sqcup K_2).$$

\item If $K$ has a homotopically trivial closed component, then $c(K) = 0$.

\item Gluing respects contact subsets: that is,
$$\Phi_{\tau}\left(c(K)\right) = \pm c(K_{\tau}).$$

\item The assignments are topologically invariant. Specifically, if $f:(\Sigma, \mb{F}) \rightarrow (\Sigma', \mb{F}')$ is a diffeomorphism, then there exists an isomorphism $f_*:V(\Sigma, \mb{F}) \rightarrow V(\Sigma', \mb{F}')$ such that 
$$ f_*\left(c(K)\right) = \pm c\left(f(K) \right)$$
for all dividing sets $K$.  Also, if $\tau$ is a gluing on $(\Sigma, \mb{F})$, $\tau' = f \circ \tau \circ f^{-1}$ is the corresponding gluing on $(\Sigma', \mb{F}')$, and $g:(\Sigma_{\tau}, \mb{F}_{\tau}) \rightarrow (\Sigma'_{\tau'}, \mb{F}'_{\tau'})$ the induced diffeomorphism, then there exists an isomorphism 
$$g_*:V(\Sigma_{\tau}, \mb{F}_{\tau}) \rightarrow V(\Sigma'_{\tau'}, \mb{F}'_{\tau'})$$ 
which respects contact elements and satisfies 
$$ g_* \circ \Phi_{\tau} = \Phi_{\tau'} \circ f_*.$$
\end{enumerate}
\end{defi}

The definition of a sutured TQFT was designed to fit the Sutured Floer Homology of a product of a sutured surface with $S^1$, and the product contact structure associated to a dividing set on a sutured surface.  Precisely, $V(\Sigma, \mb{F})^{SFH}$ is defined as $SFH\paren{-(S^1 \times \Sigma), -(S^1 \times \al)}$, $[c^{SFH}(K) ]$ is the contact subset of the product contact structure associated to $K$, and $[ \Phi^{SFH}_{\tau}]$ is the map (defined up to multiplication by $\pm 1$) on Sutured Floer homology defined in \cite{HKM}, where $\paren{-(S^1 \times \Sigma), -(S^1 \times \al)}$ is treated as a submanifold of $\paren{-(S^1 \times \Sigma_{\tau}), -(S^1 \times \al_{\tau})}$.  We denote this TQFT by $\mathcal{SFH}$.  (The reasons for the minus signs, the usage of $\al$ in place of $F$, and much more is explained in \cite{HKM}.)  Of course, we may also make these constructions over $\Z_2$, which we denote by $\mathcal{SFH}(\Z_2)$. 

The last condition is left implicit in \cite{HKM}.
We anticipate needing to consider automorphisms of particular sutured surface in future work, and so we make the last condition explicit here.  Of course, in both our TQFT and the SFH one, topological invariance is clear.  (Note that we do not ask for any sort of uniqueness for the isomorphisms $f_*$, just that some isomorphism exists.)

\begin{theorem}
\label{thm:mainthm}
The assignments $V(\Sigma, \mb{F})^X$, $[c^X(K)]$, and $[\Phi^X_{\tau}]$ form a sutured TQFT.
\end{theorem}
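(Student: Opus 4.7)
The plan is to verify each of the five axioms of Definition \ref{defi:tqft} in turn. The most substantive of these, Axiom 4, is precisely Theorem \ref{thm:respect} above, so the real work reduces to checking that the remaining four axioms follow from standard facts about singular homology and exterior algebra.

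First I would install the $\text{Spin}^c$-grading on $V(\Sigma, \mb{F})^X$ so that exterior grading $i$ corresponds to $\text{Spin}^c$-grading $L(\Sigma, \mb{F}) - 2i$. By Lemma \ref{lem:lrank}, $H_1(\Sigma, \al^+)$ is free of rank $L(\Sigma, \mb{F})$, so $\Lambda^i\paren{H_1(\Sigma, \al^+)}$ has rank $\binom{L(\Sigma, \mb{F})}{i}$ and the entire algebra has rank $2^{L(\Sigma, \mb{F})}$; this gives Axiom 1. For Axiom 2, I would use the natural splitting $H_1(\Sigma_1 \sqcup \Sigma_2, \al_1^+ \sqcup \al_2^+) \cong H_1(\Sigma_1, \al_1^+) \oplus H_1(\Sigma_2, \al_2^+)$ together with the canonical isomorphism $\Lambda(M_1 \oplus M_2) \cong \Lambda(M_1) \otimes \Lambda(M_2)$ from Section \ref{sec:extnot} to define $\varphi$. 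For a dividing set $K = K_1 \sqcup K_2$, the region $R^+(K)$ decomposes correspondingly, so a top-form orientation on $H_1(R^+(K), \al^+)$ is, up to sign, a tensor product of top-form orientations on the pieces; compatibility of $\pi_{L(K)}$ with this factorization (using $L(K) = L(K_1) + L(K_2)$, which follows from additivity of $n(\mb{F})$ and $\chi$) then yields $\varphi\paren{c(K_1) \otimes c(K_2)} = \pm c(K_1 \sqcup K_2)$.

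For Axiom 3, if $K$ contains a homotopically trivial closed component, then an innermost such component bounds a disk in $\Sigma$ disjoint from all other components of $K$ and from $\partial \Sigma$; this disk is an isolated component of $R^+(K)$ or $R^-(K)$, so Proposition \ref{prop:nontriv} forces $c(K) = 0$. Axiom 5 is a naturality statement: any diffeomorphism $f$ carries $\al^+$ to $\al'^+$ and $R^+(K)$ to $R^+(f(K))$, so every ingredient of Definition \ref{defi:contelt} transports along the induced isomorphism $f_*: H_1(\Sigma, \al^+) \raw H_1(\Sigma', \al'^+)$; functoriality of the quotient construction from Section \ref{sec:glue} then yields the commuting square involving gluing morphisms, since ${\phi_{\tau'}}_* \circ f_* = g_* \circ {\phi_{\tau}}_*$ and interior product by $\eta$ commutes with the induced isomorphism on $\Lambda^{top}(\mbox{Im }\delta)$.

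The main obstacle has already been dispatched as Theorem \ref{thm:respect}. Beyond that, the only real technical nuisance is sign bookkeeping in Axioms 2 and 5, where changing homology orientations introduces signs that are harmlessly absorbed into the two-element ambiguity of $[c(K)]$ and $[\Phi_\tau]$; once a convention is fixed, each verification reduces to a short computation in the chosen basis.
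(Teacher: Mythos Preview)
Your proposal is correct and follows essentially the same approach as the paper: set the $\mbox{Spin}^c$-grading via the exterior grading and invoke Lemma~\ref{lem:lrank} for Axiom~1, use $\Lambda(M_1\oplus M_2)\cong\Lambda(M_1)\otimes\Lambda(M_2)$ for Axiom~2, Proposition~\ref{prop:nontriv} for Axiom~3, Theorem~\ref{thm:respect} for Axiom~4, and naturality of the homological constructions for Axiom~5. If anything, you spell out more of the verifications (the innermost-disk argument, the compatibility of $\varphi$ with contact elements, and the commuting square for gluings) than the paper does.
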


We denote this TQFT by $\mathcal{X}$, and the analogous construction over $\Z_2$ by $\mathcal{X}(\Z_2)$.
 
\begin{proof}
We again drop superscripts.  Set the $\mbox{Spin}^c$-grading on $V(\Sigma, \mb{F})$ so that $V^i(\Sigma, \mb{F})$ is the homogeneous submodule of grading $L(\Sigma, \mb{F}) - 2i$.  With this, condition (1) is clear.  Condition (2) is equivalent to the statement, for two free $\Z$-modules $M_1, M_2$, that $\Lambda(M_1 \oplus M_2) \cong \Lambda(M_1) \otimes \Lambda(M_2)$.  Condition (3) is a special case of Proposition \ref{prop:nontriv}.  Condition (4) is Theorem \ref{thm:respect}.  Since everything in sight is defined via intrinsic topological invariants, condition (5) is clear.
\end{proof}

\subsection{Comparisons between the TQFTs}

In this subsection, we drop the superscript $X$'s; we always refer to the exterior TQFT, although analogous properties also hold for the SFH TQFT.

First, we note that the following is immediate from definitions and Proposition \ref{prop:nontriv}.

\begin{prop}
Let $\veco{K}$ be a oriented dividing set on a sutured surface $(\Sigma, \mb{F})$.  Then the following are equivalent:
\begin{enumerate}

\item $K$ is non-isolating;

\item $c(\veco{K}) \in V(\Sigma, \mb{F})$ is non-zero;

\item $c(\veco{K}) \in V(\Sigma, \mb{F})$ is primitive.

\end{enumerate}
\end{prop}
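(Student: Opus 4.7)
The equivalence (1) $\iff$ (2) is exactly Proposition \ref{prop:nontriv}, and (3) $\implies$ (2) is immediate since $0$ is never primitive in a free abelian group. The substantive direction is therefore (1) $\implies$ (3), and the plan is to show that under the non-isolating hypothesis, $c(\veco{K})$ is (up to sign) a wedge of $L(K)$ elements drawn from a basis of $H_1(\Sigma, \al^+)$, making it a basis vector of the free module $\Lambda^{L(K)}\paren{H_1(\Sigma, \al^+)}$ and hence primitive in $V(\Sigma, \mb{F})$.

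Assume $K$ is non-isolating. Proposition \ref{prop:isocomps} supplies two facts: the rank of $H_1(R^+(K), \al^+)$ equals $L(K)$, so the generator $\omega$ lies in $\Lambda^{L(K)}\paren{H_1(R^+(K), \al^+)}$; and the induced map ${i_K}_*$ on $H_1$ is injective. Since the exterior-algebra map induced by ${i_K}_*$ preserves grading, ${i_K}_*(\omega)$ already sits in $\Lambda^{L(K)}\paren{H_1(\Sigma, \al^+)}$, so $\pi_{L(K)}$ acts as the identity on it and $c(\veco{K}) = {i_K}_*(\omega)$. Primitivity then reduces to a purely algebraic statement: the image of the $H_1$-level map ${i_K}_*$ is a \emph{direct summand} of $H_1(\Sigma, \al^+)$. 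Granted this, any basis of the image extends to a basis $\{e_1, \ldots, e_r\}$ of $H_1(\Sigma, \al^+)$, and $c(\veco{K})$ becomes $\pm\, e_1 \wedge \cdots \wedge e_{L(K)}$, a basis element of $\Lambda^{L(K)}\paren{H_1(\Sigma, \al^+)}$.

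The heart of the proof is therefore to establish that the cokernel $H_1(\Sigma, \al^+) / {i_K}_*\paren{H_1(R^+(K), \al^+)}$ is torsion-free. The long exact sequence of the triple $(\Sigma, R^+(K), \al^+)$, combined with the vanishing of $H_2(\Sigma, R^+(K))$ (via the excision/duality identification used in Proposition \ref{prop:isocomps}, since $R^-(K)$ has no isolated components) and of $H_0(R^+(K), \al^+)$ (since $R^+(K)$ has no isolated components), collapses to
$$ 0 \raw H_1(R^+(K), \al^+) \raw H_1(\Sigma, \al^+) \raw H_1(\Sigma, R^+(K)) \raw 0. $$
I expect the main obstacle to be proving that this rightmost term is free. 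The plan is to use excision to identify $H_1(\Sigma, R^+(K)) \cong H_1(R^-(K), K)$; apply Poincar\'{e}--Lefschetz duality on the compact oriented surface $R^-(K)$, whose boundary decomposes as $A^- \cup K$, to obtain $H_1(R^-(K), K) \cong H^1(R^-(K), A^-)$; and then invoke the universal coefficient theorem together with the vanishing of $H_0(R^-(K), A^-)$ (which holds because $R^-(K)$ has no isolated components) to conclude that the group is free. Once the cokernel is known to be free, the short exact sequence splits, the image of ${i_K}_*$ is a direct summand, and the basis argument from the previous paragraph completes the proof.
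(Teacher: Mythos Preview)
Your argument is correct. The paper itself offers no proof beyond the sentence ``immediate from definitions and Proposition \ref{prop:nontriv}'', so there is nothing to compare directly; you have supplied the content that the paper elides. The only nontrivial implication is (1) $\Rightarrow$ (3), and you have correctly isolated the issue: injectivity of ${i_K}_*$ on $H_1$ yields a nonzero decomposable element, but primitivity requires that the image be a \emph{direct summand}. Your route --- the long exact sequence of the triple $(\Sigma, R^+(K), \al^+)$, collapsed to a short exact sequence using the vanishing of $H_2(\Sigma, R^+(K))$ and $H_0(R^+(K),\al^+)$, followed by excision $H_1(\Sigma, R^+(K)) \cong H_1(R^-(K), K)$, Lefschetz duality to $H^1(R^-(K), A^-)$, and UCT with $H_0(R^-(K), A^-)=0$ --- is exactly right, and indeed reuses the identifications the paper already invoked in Proposition \ref{prop:isocomps}. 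The same short exact sequence and freeness of the cokernel are used implicitly later in the paper (Lemma \ref{lem:ranks} and Proposition \ref{prop:neg}, where a basis of $H_1(\Sigma,\al^+)$ extending one of $H_1(R^+(K),\al^+)$ is chosen without comment), so your argument is in the spirit of the paper rather than a genuinely different approach; it simply makes explicit what the author presumably regarded as routine.
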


The analogous claim for the Sutured Floer Homology TQFT was Conjecture 7.13 of \cite{HKM}, and was proved in \cite{Massot}.

Next, a \emph{simple gluing} is a gluing $\tau$ where the glued arcs $\gamma$ and $\gamma'$ each contain a single suture.  The following should be compared with Lemma 7.9 of \cite{HKM}.

\begin{prop}
\label{prop:simpglue}
If $\veco{\tau}$ is a simple gluing equipped with orientation, then $\Phi_{\veco{\tau}}$ is an isomorphism.
\end{prop}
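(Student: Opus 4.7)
First I would analyze the local structure of a simple gluing. The alternating pattern $F^+,\al^+,F^-,\al^-,\ldots$ around each boundary component (Definition \ref{def:sutsur}) forces any subarc of $\partial\Sigma$ whose endpoints lie in $\al$ and whose interior contains exactly one suture to be a short arc with one $\al^+$ endpoint, one $\al^-$ endpoint, and no interior vertices. Hence in a simple gluing $\gamma$ and $\gamma'$ are each such arcs, the identifications $\tau$ performs among vertices are only $p^+_\gamma\sim p^+_{\gamma'}$ and $p^-_\gamma\sim p^-_{\gamma'}$, and both merged points remain on $\partial\Sigma_\tau$. In particular $\phi_\tau(\al^+)=\al^+_\tau$, so $H^0(\phi_\tau(\al^+),\al^+_\tau)=0$ and hence $\operatorname{Im}\delta=0$. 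Any orientation $\eta$ is then a generator $\pm 1$ of $\Lambda^{\mathrm{top}}(0)=\Z$, the interior product $\iota_\eta$ equals $\pm\mathrm{id}$, and $\Phi_{\veco{\tau}}=\pm\phi_{\tau*}$ as maps of the ambient exterior algebras.

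Since the exterior algebra functor carries isomorphisms of free $\Z$-modules to isomorphisms, it suffices to show that $\phi_{\tau*}:H_1(\Sigma,\al^+)\to H_1(\Sigma_\tau,\al^+_\tau)$ is an isomorphism. For this I would enlarge both subspaces and consider the commutative square
\begin{equation*}
\begin{CD}
H_1(\Sigma,\al^+) @>>> H_1(\Sigma,\al^+\cup\gamma\cup\gamma')\\
@V{\phi_{\tau*}}VV @VV{\phi_{\tau*}}V \\
H_1(\Sigma_\tau,\al^+_\tau) @>>> H_1(\Sigma_\tau,\al^+_\tau\cup\gamma_\tau).
\end{CD}
\end{equation*}
The two horizontal arrows are isomorphisms because $\al^+\cup\gamma\cup\gamma'$ deformation retracts onto $\al^+$ (slide each of $\gamma,\gamma'$ onto its unique $\al^+$-endpoint), and likewise $\al^+_\tau\cup\gamma_\tau$ retracts onto $\al^+_\tau$, so the long exact sequences of the corresponding triples degenerate. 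For the right vertical arrow I would equip $\Sigma_\tau$ with a CW structure in which $\al^+_\tau\cup\gamma_\tau$ is a subcomplex, lift it via $\phi_\tau$ to a CW structure on $\Sigma$ in which $\al^+\cup\gamma\cup\gamma'$ is a subcomplex, and note that $\phi_\tau$ is cellular and bijective on cells not lying in the identified locus. The induced map of relative cellular chain complexes is therefore an isomorphism, and so is the map on relative $H_1$. Commutativity then forces the left vertical arrow, namely $\phi_{\tau*}$, to be an isomorphism as well, completing the proof.

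The delicate step is the ``excision''-style identification on the right-hand side: because $\gamma_\tau$ meets $\partial\Sigma_\tau$ at its endpoints, classical excision does not directly apply, which is why I would prefer the cellular chain-complex comparison above. Alternatively one could apply Mayer--Vietoris to the pushout presentation $\Sigma_\tau=\Sigma\cup_{\gamma\sqcup\gamma'}\gamma$ and then pass to the relative versions with respect to $\al^+$; either route relies on essentially the same geometric input, namely the contractibility of $\gamma$ and $\gamma'$ together with the fact that no vertex is swallowed by a simple gluing.
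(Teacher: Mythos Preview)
Your proof is correct. The first paragraph---observing that no positive vertex is swallowed, so $\Phi_{\veco{\tau}}=\pm{\phi_\tau}_*$ on exterior algebras---matches the paper exactly. Where you diverge is in proving that ${\phi_\tau}_*:H_1(\Sigma,\al^+)\to H_1(\Sigma_\tau,\al^+_\tau)$ is an isomorphism.

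The paper argues more directly: surjectivity follows by lifting curves (as in the proof of Theorem~\ref{thm:respect}), and the two groups have equal rank because gluing along a single arc drops $\chi(\Sigma)$ by~$1$ while merging the two $\al^+$-endpoints drops $n(\mb{F})$ by~$1$, so $L(\Sigma,\mb{F})=L(\Sigma_\tau,\mb{F}_\tau)$ and Lemma~\ref{lem:lrank} finishes. Your route instead enlarges the relative subspace to include the glued arcs, uses deformation retraction to identify the horizontal maps, and then compares relative cellular chain complexes to handle the right vertical map. This is a perfectly good excision-type argument and avoids any appeal to rank formulas, at the cost of being a little heavier. The paper's version is shorter and leans on machinery (Lemma~\ref{lem:lrank}) already in place; yours is more self-contained and would generalize more readily to situations where a clean rank count is unavailable.
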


\begin{proof}
Since there are no vertices in the interior of $\gamma$ and $\gamma'$, the set $\phi_{\tau}(\al^+) \setminus \al^+_{\tau}$ is empty.  So, $\phi_{\tau}$ induces a map from $H_1(\Sigma, \al^+) \rightarrow H_1(\Sigma_{\tau}, \al^+_{\tau})$.  It is easy to see, as in the proof of Theorem \ref{thm:respect}, that this map is surjective.  Furthermore, in the glued surface, the Euler characteristic goes down by 1 (since we glue along a single non-closed arc), but the number of positive vertices also goes down by one (the vertices on $\gamma$ and $\gamma'$ becoming a single vertex).  By Lemma \ref{lem:lrank}, the groups are therefore isomorphic, meaning that the map is an isomorphism.
\end{proof}

In \cite{HKM}, some time is spent looking at the sutured annulus $(S^1 \times D^1, \mb{F})$ considered in Section \ref{sec:notex}.  The non-isolating dividing sets on this surface are denoted $K_+, K_-, K_0, K_1$ and $L_n$ for $n\in \Z$. The first four, as well as $L_0$ and $L_1$, are those shown in Figure \ref{fig:annex}; $L_n$ for other values of $n$ is gotten by applying the diffeomorphism $T^n$ to $L_0$, where $T$ is a positive Dehn twist about a core circle of the annulus.  An analogue of the following was conjectured in Section 7.5 of \cite{HKM}.

\begin{prop}
There exist $x, y \in V(\Sigma, \mb{F})$ such that
$$ x \in [ c(L_0)],\,\,\,\, y \in [ c(K_0) ] = [c(K_1) ],\,\,\,\,\mbox{and } x + ny \in [c(L_n)].$$
\end{prop}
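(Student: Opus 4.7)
The plan is to take $x = \beta_1$ and $y = \beta_2$ and to establish the identity $[c(L_n)] = \{\pm(\beta_1 + n\beta_2)\}$ by leveraging topological invariance under the Dehn twist. From the computations collected in Section \ref{sec:notex}, we already have $\beta_1 \in [c(L_0)]$ and $\beta_2 \in [c(K_0)] = [c(K_1)]$, so the first two requirements are immediate once we fix these choices of $x$ and $y$. The content of the proposition lies in identifying $c(L_n)$ for $n \neq 0, 1$, and the natural tool is the fact that $L_n = T^n(L_0)$ where $T$ is a positive Dehn twist about a core circle of the annulus.

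Choosing a representative of $T$ whose support is contained in the interior of $S^1 \times D^1$, we may regard $T$ as a self-diffeomorphism of $(\Sigma, \mb{F})$ fixing $\mb{F}$ pointwise. Axiom (5) of Definition \ref{defi:tqft}, combined with Theorem \ref{thm:mainthm}, then supplies an isomorphism $T_* : V(\Sigma, \mb{F}) \to V(\Sigma, \mb{F})$ satisfying $T_*([c(L_{n-1})]) = [c(L_n)]$; in our Exterior TQFT, $T_*$ is just the exterior algebra map induced by the $T$-action on $H_1(\Sigma, \al^+)$. By induction it suffices to compute this action on the distinguished basis $\{\beta_1, \beta_2\}$.

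Here I would invoke the standard relative-homology version of the Picard--Lefschetz formula for a Dehn twist: since $\beta_2$ is isotopic to the twisting curve, $T_*(\beta_2) = \beta_2$, and since $\beta_1$ meets the core circle transversely in a single point, $T(\beta_1)$ is isotopic rel $\al^+$ to the concatenation of $\beta_1$ with one copy of the core, giving $T_*(\beta_1) = \beta_1 + \beta_2$. (Orientation conventions for $\beta_1, \beta_2$ are chosen so that this sign matches the computation $c(\veco{L}_1) = \beta_1 + \beta_2$ recorded in Section \ref{sec:notex}; the check at $n = 1$ will pin down the sign once and for all.) Induction then yields $T^n_*(\beta_1) = \beta_1 + n\beta_2$, so
\[
[c(L_n)] = T^n_*([c(L_0)]) = \{\pm T^n_*(\beta_1)\} = \{\pm(\beta_1 + n\beta_2)\},
\]
which contains $x + ny$ as required.

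The one delicate point is verifying the sign in $T_*(\beta_1) = \beta_1 + \beta_2$; this is the main (and essentially only) obstacle, since it depends on orienting $\beta_1$, $\beta_2$, and $T$ compatibly. However the $\pm$-ambiguity inherent in axiom (5) causes no real difficulty, because the proposition only demands membership of $x + ny$ in the two-element set $[c(L_n)]$, not an equality with a specified element. A direct alternative, if one wishes to avoid invoking topological invariance, is to compute $c(L_n)$ directly from Definition \ref{defi:contelt}: $R^+(L_n)$ is a disk whose generator of $H_1(R^+(L_n), \al^+)$ is an arc parallel to the dividing set, and this arc visibly represents $\beta_1 + n\beta_2$ in $H_1(\Sigma, \al^+)$.
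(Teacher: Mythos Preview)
Your proposal is correct and follows essentially the same approach as the paper: set $x=\beta_1$, $y=\beta_2$, compute the action of the Dehn twist $T$ on $H_1(\Sigma,\al^+)$ to get $T_*(\beta_1)=\beta_1+\beta_2$ and $T_*(\beta_2)=\beta_2$, and conclude $(T^n)_*(\beta_1)=\beta_1+n\beta_2$. The paper's proof is just these three lines; your version supplies more justification (invoking axiom~(5), the Picard--Lefschetz picture, and the sign check at $n=1$) and adds an alternative direct computation, but the core argument is identical.
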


\begin{proof}
Referring back to Section \ref{sec:notex}, let $x = \beta_1$ and $y = \beta_2$.  Since $T_*(\beta_1) = \beta_1+\beta_2$ and $T_*(\beta_2) = \beta_2$, we have $\paren{T^n}_*(\beta_1) = \beta_1 + n\beta_2$.
\end{proof}

Finally, we end this subsection with a definition and a result that will be important to us.  Let $L_1, L_2, $ and $L_3$ be the three dividing sets shown in Figure \ref{fig:byp1} on $\paren{D^2, \mb{F}(3)}$.  On a general sutured surface $(\Sigma, \mb{F})$, an unordered triple of dividing sets $\{K_1, K_2, K_3\}$ is said to form a \emph{bypass triple} if there is a sutured surface $(\Sigma', \mb{F}')$, a dividing set $K'$ on $(\Sigma', \mb{F}')$, and a gluing $\tau$ on $\paren{\Sigma' \coprod D^2, \mb{F}' \coprod \mb{F}(3)}$, such that $\paren{\Sigma' \coprod D^2}_{\tau} = \Sigma$, $\paren{\mb{F}' \coprod \mb{F}(3)}_{\tau} = \mb{F}$ and $\paren{K' \coprod L_i}_{\tau} = K_i$ for $i=1,2,3$.  Slightly less formally, $\{K_1, K_2, K_3\}$ form a bypass triple if they are the same outside of some disk in the interior of $\Sigma$, and within this disk all three contain three parallel arcs, but differentiated by $120^{\circ}$ twists, as shown in Figure \ref{fig:byptrip}.  The contact subsets of three dividing sets in a bypass triple obey the following relation.

\begin{figure}[htbp]
	\centering
		\includegraphics[width=0.65\textwidth]{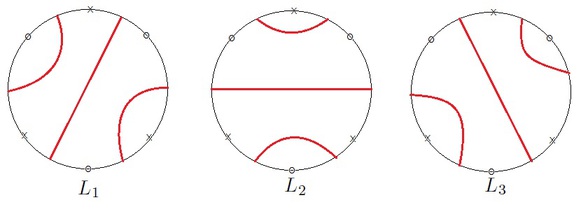}
	\caption{The three dividing sets $L_1, L_2, L_3$ on $\paren{D^2, \mb{F}(3)}$.}
	\label{fig:byp1}
\end{figure}

\begin{figure}
	\centering
		\includegraphics[width=0.75\textwidth]{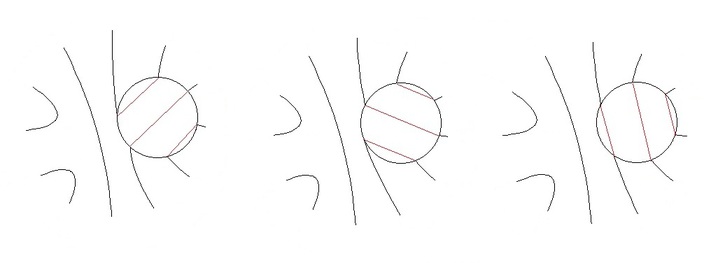}
	\caption{Three dividing sets that are bypass related; they are the same outside of the small disk shown.}
	\label{fig:byptrip}
\end{figure}

\begin{prop}
\label{prop:byp}
If $\{K_1, K_2,K_3\}$ is a bypass triple of dividing sets on a sutured surface $(\Sigma, \mb{F})$, then for any choices of orientations on these dividing sets, there exist $\epsilon_1, \epsilon_2, \epsilon_3 \in \{\pm 1\}$ such that
$$ \epsilon_1c(\veco{K}_1) + \epsilon_2c(\veco{K}_2) +\epsilon_3c(\veco{K}_3) = 0.$$
\end{prop}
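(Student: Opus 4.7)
The plan is to reduce the claim to the universal local case on $\paren{D^2, \mb{F}(3)}$, exploiting the linearity of the TQFT structure maps. By definition of a bypass triple, there exist $(\Sigma', \mb{F}')$, a dividing set $K'$, and a gluing $\tau$ on $\paren{\Sigma' \coprod D^2, \mb{F}' \coprod \mb{F}(3)}$ with $K_i = (K' \coprod L_i)_{\tau}$, where $L_1, L_2, L_3$ are the three local dividing sets of Figure~\ref{fig:byp1}. Fix orientations $\veco{K'}, \veco{L}_i, \veco{\tau}$, and let $\veco{K}_i$ denote the orientation on $K_i$ induced from $\veco{K'} \coprod \veco{L}_i$ via the gluing. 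By the disjoint union axiom (Condition~(2)) together with Theorem~\ref{thm:respect}, the composition $\Phi_{\veco{\tau}} \circ \varphi$ sends $c(\veco{K'}) \otimes c(\veco{L}_i)$ to $\delta_i \, c(\veco{K}_i)$ for some $\delta_i \in \{\pm 1\}$. Since both $\varphi$ and $\Phi_{\veco{\tau}}$ are $\Z$-linear, any relation $\sum \epsilon_i c(\veco{L}_i) = 0$ in $V\paren{D^2, \mb{F}(3)}$ yields $\sum (\epsilon_i \delta_i) c(\veco{K}_i) = 0$ in $V(\Sigma, \mb{F})$; alternative orientations on the $K_i$'s merely flip signs, so this gives the statement for any choice of orientations.

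It therefore suffices to prove the relation on $\paren{D^2, \mb{F}(3)}$. Using the notation of Section~\ref{sec:sutsur}, $H_1(D^2, \al^+)$ is free of rank two with basis $\beta_1$ (an arc from $\al_1$ to $\al_3$) and $\beta_3$ (an arc from $\al_3$ to $\al_5$). The three dividing sets in the bypass triple are the three non-crossing matchings of $(F_1, \ldots, F_6)$ related by $120^\circ$ rotation, namely $\{(F_1, F_2), (F_3, F_6), (F_4, F_5)\}$, $\{(F_1, F_4), (F_2, F_3), (F_5, F_6)\}$, and $\{(F_1, F_6), (F_2, F_5), (F_3, F_4)\}$; each consists of two short arcs cutting off half-disks and one long arc pairing opposite sutures. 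For each $L_i$, $R^+(L_i)$ has two components: a half-disk containing a single positive vertex (with trivial relative $H_1$) together with a single larger disk whose boundary carries exactly two of the three positive vertices. Thus $H_1(R^+(L_i), \al^+)$ is free of rank $L(L_i) = 1$, generated by an arc connecting those two positive vertices, and under inclusion into $(D^2, \al^+)$ this arc represents $\beta_1$, $\beta_3$, or $\beta_1 + \beta_3$ according to which pair of positive vertices lies in the large component.

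Consequently, with suitable choices of $\omega_i$, and after possibly relabeling the $L_i$'s, we have $c(\veco{L}_1) = \beta_1$, $c(\veco{L}_2) = \beta_3$, and $c(\veco{L}_3) = \beta_1 + \beta_3$, whence
$$ c(\veco{L}_1) + c(\veco{L}_2) - c(\veco{L}_3) = \beta_1 + \beta_3 - (\beta_1 + \beta_3) = 0, $$
furnishing signs $\epsilon_i \in \{\pm 1\}$ on $\paren{D^2, \mb{F}(3)}$; combined with the reduction above, this completes the proof. The only real obstacle is sign bookkeeping in the local calculation, but since a uniform flip in any single $c(\veco{L}_i)$ is absorbed by flipping the corresponding $\epsilon_i$, the three-term relation persists for every choice of orientations. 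The conceptual point -- that every bypass relation descends, by linearity of $\Phi_{\veco{\tau}}$, from the universal relation on the three-suture disk -- is immediate from the TQFT axioms.
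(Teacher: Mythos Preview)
Your proof is correct and follows the same two-step strategy as the paper: verify the relation explicitly for $L_1, L_2, L_3$ on $\paren{D^2, \mb{F}(3)}$, then push it through $\Phi_{\veco{\tau}}\circ\varphi$ using linearity and Theorem~\ref{thm:respect}. The paper's own proof is terser---it simply asserts that the local relation ``clearly holds'' and then writes $c(\veco{K}_i) = \pm\Phi_{\veco{\tau}}\paren{c(\veco{K}') \otimes c(\veco{L}_i)}$---whereas you actually carry out the computation that $c(\veco{L}_i)$ equals $\beta_1$, $\beta_3$, or $\beta_1+\beta_3$; this is a welcome elaboration rather than a different argument.
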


\begin{proof}
The equation clearly holds for the special case of $(\Sigma, \mb{F}) = (D^2, \mb{F}(3))$ with $\veco{K}_i$ associated to the non-isolating dividing set $L_i$ as above, for $i=1,2,3$.   For a bypass triple $\{K_1, K_2, K_3\}$ on a general surface $(\Sigma, \mb{F})$, simply note that $c(\veco{K}_i) = \pm\Phi_{\veco{\tau}}\paren{c(\veco{K}') \otimes c(\veco{L}_i)}$ with $\tau$ and $K'$ as above, and any homology orientations on dividing sets and gluings.
\end{proof}

The analogue of this result in the Sutured Floer homology TQFT is discussed in Section 7.2 of \cite{HKM}.

\subsection{Equivalence of the SFH and Exterior TQFTs}
\label{sec:atss}

In light of the above, it is natural to ask whether our TQFT is equivalent to that of \cite{HKM}.  To make the question precise, we have the following.

\begin{defi}
\label{defi:equiv}
We say that two sutured TQFTs $\mathcal{F}$ and $\mathcal{G}$ are \emph{equivalent} if there are isomorphisms 
$$H_{\Sigma, \mb{F}}: V(\Sigma, \mb{F})^{\mathcal{F}} \rightarrow V(\Sigma, \mb{F})^{\mathcal{G}}$$ 
for every sutured surface $(\Sigma, \mb{F})$, such that

\begin{enumerate}

\item $H_{\Sigma, \mb{F}}\left([c^{\mathcal{F}}(K)]\right) = [c^{\mathcal{G}}(K)]$ for all dividing sets $K$ on $(\Sigma, \mb{F})$; and

\item for any gluing $\tau$ on $(\Sigma, \mb{F})$, 
$$ H_{\Sigma_{\tau}, \mb{F}_{\tau}} \circ \Phi^{\mathcal{F}}_{\tau} = \pm \Phi^{\mathcal{G}}_{\tau} \circ H_{\Sigma, \mb{F}}$$
where the maps $\Phi^{\mathcal{F}}_{\tau}$ and $\Phi^{\mathcal{G}}_{\tau}$ are any representatives of the gluing morphism sets.

\end{enumerate}
\end{defi}

This brings us to our second main theorem.

\begin{theorem}
\label{thm:isom}
$\mathcal{SFH}(\Z_2)$ and $\mathcal{X}(\Z_2)$  are equivalent (in the sense of Definition \ref{defi:equiv}).
\end{theorem}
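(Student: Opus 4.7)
The plan is to prove a uniqueness theorem: any sutured TQFT over $\Z_2$ satisfying the axioms of Definition \ref{defi:tqft}, together with the two natural auxiliary conditions that (i) a dividing set has nonzero contact element iff it is non-isolating, and (ii) simple gluings induce isomorphisms, must be equivalent to $\mathcal{X}(\Z_2)$. Both $\mathcal{X}(\Z_2)$ and $\mathcal{SFH}(\Z_2)$ satisfy these conditions: for $\mathcal{X}(\Z_2)$ by Theorem \ref{thm:mainthm}, Proposition \ref{prop:nontriv}, and Proposition \ref{prop:simpglue}; for $\mathcal{SFH}(\Z_2)$ by the cited results in \cite{HKM} and \cite{Massot}. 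So the theorem reduces to this abstract uniqueness statement. The strategy is to construct the equivalence first on sutured disks, where it is pinned down by the rank axiom and the bypass relation, and then to propagate it to all sutured surfaces via gluings.

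First I would reduce to sutured disks. Any sutured surface $(\Sigma, \mb{F})$ can be obtained from a disjoint union of sutured disks by a sequence of simple gluings: iteratively cut $(\Sigma, \mb{F})$ along a properly embedded arc meeting $\partial \Sigma$ in two vertices and crossing a single suture, until only disks remain. By Proposition \ref{prop:simpglue} (and its SFH analogue, cf. Lemma 7.9 of \cite{HKM}), each such gluing induces an isomorphism. Given compatible isomorphisms $H_{D^2, \mb{F}(N)}$ on all sutured disks, axiom (4) and the disjoint-union compatibility of axiom (2) let us extend to isomorphisms $H_{\Sigma, \mb{F}}$ on all sutured surfaces. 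To see independence of the cut system, I would argue that any two cut systems are related by a sequence of elementary moves (exchanging the order of disjoint cuts, or a local bypass-type replacement) whose effects are absorbed by the bypass relation of Proposition \ref{prop:byp} and the topological invariance of axiom (5).

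The heart of the proof is the disk case. I would show that on $(D^2, \mb{F}(N))$ the contact elements of non-isolating dividing sets $\Z_2$-linearly span $V(D^2, \mb{F}(N))$ and that all linear relations among them are consequences of bypass-triple relations. Granted this, any sutured TQFT meeting the hypotheses is determined up to isomorphism on $(D^2, \mb{F}(N))$, and the map $c^{\mathcal{X}}(K) \mapsto c^{\mathcal{SFH}}(K)$ extends $\Z_2$-linearly to a well-defined isomorphism $H_{D^2, \mb{F}(N)}$ that respects contact elements by construction. For the spanning statement in $\mathcal{X}(\Z_2)$, one exhibits $2^{N-1}$ non-isolating dividing sets whose contact elements, computed directly from Definition \ref{defi:contelt}, form a basis of $\Lambda\paren{H_1(D^2, \al^+)} \cong \Lambda\paren{\Z_2^{N-1}}$; axiom (1) then forces the same rank in any sutured TQFT, so the spanning set is automatically a basis on the nose.

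The main obstacle will be proving that bypass relations generate \emph{all} linear relations among disk contact elements; equivalently, that the quotient of the free $\Z_2$-module on non-isolating dividing sets by bypass relations has rank exactly $2^{N-1}$. The upper bound is the nontrivial half: I would prove it by induction on $N$, using a distinguished boundary arc to perform a bypass move that peels off one suture and reduces to $(D^2, \mb{F}(N-1))$, tracking how each non-isolating dividing set becomes a $\Z_2$-combination of two dividing sets on the smaller disk. The matching lower bound $2^{N-1}$ comes for free from the explicit basis in $\mathcal{X}(\Z_2)$ together with the fact that Proposition \ref{prop:byp} holds in $\mathcal{X}(\Z_2)$. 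Combined with axiom (5) and Proposition \ref{prop:simpglue}, this disk-level presentation then propagates the equivalence to all sutured surfaces and verifies compatibility with general gluings by decomposing them into simple ones.
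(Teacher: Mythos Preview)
Your approach is correct in outline but takes a genuinely different route from the paper. The paper does not stop at general disks $(D^2,\mb{F}(N))$: it quadrangulates $(\Sigma,\mb{F})$ all the way down to a \emph{totally decomposed} surface, a disjoint union of copies of $(D^2,\mb{F}(2))$ (Proposition~\ref{prop:cutitdown}). On $(D^2,\mb{F}(2))$ each homogeneous piece of the grading has rank~$1$, so the isomorphism $H$ is forced outright by axiom~(1), with no presentation theorem needed. The paper then fixes one quadrangulation, defines $H_{\Sigma,\mb{F}}$ by conjugating through the gluing isomorphisms, and shows this single map respects \emph{all} contact elements by an induction on the number of excess intersections of $K$ with the $1$-skeleton, using the bypass relation to reduce that count by two at each step. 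Compatibility with arbitrary gluings then follows because both sides agree on a basis of contact elements.

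By contrast, your route requires proving that bypass relations generate all $\Z_2$-linear relations among contact elements on $(D^2,\mb{F}(N))$; this is true (and appears in Mathews' work) but is strictly more than the paper needs. You also take on the burden of showing independence of the cut system via elementary moves, which the paper sidesteps entirely: it simply picks one decomposition, builds one map, and verifies directly that this map has all required properties. The payoff of your approach is a clean structural statement about the ``universal'' module generated by dividing sets modulo bypass relations; the payoff of the paper's approach is that the base case becomes trivial and no well-definedness argument is required. Note also that the paper's argument does not use your hypothesis~(i) (nonzero iff non-isolating): Corollary~\ref{cor:uniquez2} extracts exactly what is needed, namely that simple gluings are isomorphisms and that the theory agrees with $\mathcal{X}(\Z_2)$ on $(D^2,\mb{F}(1))$ and $(D^2,\mb{F}(2))$.
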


We expect that the analogous statement with coefficients in $\Z$ is true as well.

The rest of this section is devoted to the proof of Theorem \ref{thm:isom}.  As we work over $\Z_2$, we may totally dispense with homology orientations and all that come with them.   
Note also that we can then write Proposition \ref{prop:byp} as
$$ c(K_1) + c(K_2) + c(K_3) = 0$$
for a bypass triple $\{K_1, K_2, K_3\}$.

The strategy we undertake to show the equivalence is to cut our surface into copies of $\paren{D^2, \mb{F}(2)}$. (This process is studied in depth in \cite{itsybitsy}.)  When we are done, we will have in fact shown that for any TQFT over $\Z_2$, the contact subsets for dividing sets on $\paren{D^2, \mb{F}(2)}$ essentially determine all the information contained in the TQFT.

Recall notation from Section \ref{sec:sutsur}.   
Up to isotopy, there are two non-isolating dividing sets $K_1$ and $K_2$ on $\paren{D^2, \mb{F}(2)}$: $K_1$ has a chord connecting $F_1$ with $F_2$, and $K_2$ has a chord connecting $F_1$ with $F_4$. 
The group $H_1(R^+(K_1), \al^+)$ is trivial, while $H_1(R^+(K_2), \al^+) \cong H_1(\Sigma, \al^+)$ is generated by the class of $\beta_1$.  Then $V\paren{D^2, \mb{F}(2)} \cong \Z_2^2$ is generated by $c\paren{K_1} = 1$ and $c\paren{K_2} = \beta_1$.

Call a sutured surface $(\Sigma, \mb{F})$ \emph{totally decomposed} if $(\Sigma, \mb{F}) = \bigsqcup_{i=1}^k (\Sigma_i, \mb{F}_i)$ for some $k$, where each $(\Sigma_i, \mb{F}_i)$ diffeomorphic to $\paren{D^2, \mb{F}(2)}$.  Then, if we fix diffeomorphisms identifying each component of $(\Sigma, \mb{F})$ with this prototype, it is clear that each dividing set is determined by its contact element, which can be written as a tensor product of elements all of which are $1$ or $\beta_1$.  

Let $\ell$ be a multicurve in $\Sigma$ composed of arcs that intersect only at their boundaries, with one boundary point of each component in $\al^+$ and the other boundary in $\al^-$.  We may cut open $\Sigma$ along $\ell$ to form a new surface, which we denote $\Sigma_{\ell}$.  Let $\phi_{\ell}$ denote the quotient map that is the reverse of cutting open.  Then $\Sigma_{\ell}$ naturally comes equipped with sutures $F_{\ell}^{\pm}$ and vertices $\al_{\ell}^{\pm}$.  Specifically, $\al_{\ell}^+ = \phi_{\ell}^{-1}(\al^+)$, and similarly for $\al_{\ell}^-$; notice that the endpoints of $\ell$ become doubled.  Similarly define $F_{\ell}^{\pm}$, except each with one new suture in the interior of $\phi_{\ell}^{-1}(\ell)$ for each component of $\ell$, with signs determined by the neighboring vertices.  The reverse of the process of cutting open a surface is a gluing on $(\Sigma_{\ell}, \mb{F}_{\ell})$.

\begin{prop}
\label{prop:cutitdown}
Let $(\Sigma, \mb{F})$ be a sutured surface with no components diffeomorphic to $(D^2, \mb{F}(1))$.  Then there is a totally decomposed sutured surface $(\Sigma', \mb{F}')$, and a gluing $\tau$ on $(\Sigma', \mb{F}')$, such that $(\Sigma, \mb{F}) \cong ({\Sigma'}_{\tau}, {\mb{F}'}_{\tau})$ and $\Phi_{\tau}$ is an isomorphism.
\end{prop}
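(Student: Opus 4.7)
The plan is to induct on the complexity $n(\mb{F}) - 2\chi(\Sigma)$. This quantity is nonnegative for any sutured surface with no $(D^2, \mb{F}(1))$ components (as can be checked component by component, using $\chi \leq 1$ per component and $n \geq 2$ on any disk component), and it equals $0$ precisely when $(\Sigma, \mb{F})$ is totally decomposed. In the base case I would take $\tau$ to be the trivial (empty) gluing, so that $\Sigma'_\tau = \Sigma'$ and $\Phi_\tau$ is the identity. For the inductive step, I would find a single properly embedded arc $\ell$ with one endpoint in $\al^+$ and the other in $\al^-$, cut along it to form $(\Sigma_\ell, \mb{F}_\ell)$, and observe that the reverse of this cut is a \emph{simple} gluing — the two identified arcs each contain exactly one new suture — so by Proposition \ref{prop:simpglue} it induces an isomorphism of sutured surface algebras.

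To exhibit such an arc $\ell$ without producing $(D^2, \mb{F}(1))$ components, I would consider two cases. If some component of $(\Sigma, \mb{F})$ is a disk $(D^2, \mb{F}(N))$ with $N \geq 3$, take $\ell$ to be the chord from $\al_1$ to $\al_4$ in the notation of Section \ref{sec:sutsur}; this separates the component into $(D^2, \mb{F}(2))$ and $(D^2, \mb{F}(N-1))$, neither being $(D^2, \mb{F}(1))$. Otherwise, since $(\Sigma, \mb{F})$ is not totally decomposed, some component must have $\chi \leq 0$ and therefore has either multiple boundary components or positive genus. Standard surface topology then produces a properly embedded, non-boundary-parallel arc $\ell$ with endpoints in $\al^+$ and $\al^-$ (connecting two distinct boundary components, or going around a handle, as appropriate); the non-boundary-parallel condition prevents a bigon with a single suture from being cut off, so no $(D^2, \mb{F}(1))$ piece is produced.

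An Euler characteristic computation shows that cutting along any such $\ell$ increases both $n(\mb{F})$ and $\chi(\Sigma)$ by $1$, so $n(\mb{F}) - 2\chi(\Sigma)$ strictly decreases by $1$. The inductive hypothesis thus applies to $(\Sigma_\ell, \mb{F}_\ell)$, yielding a totally decomposed $(\Sigma', \mb{F}')$ with a gluing $\tau'$ satisfying $(\Sigma_\ell, \mb{F}_\ell) = (\Sigma'_{\tau'}, \mb{F}'_{\tau'})$ and $\Phi_{\tau'}$ an isomorphism. Combining $\tau'$ with the simple reverse gluing that recovers $(\Sigma, \mb{F})$ from $(\Sigma_\ell, \mb{F}_\ell)$ produces a single gluing $\tau$ on $(\Sigma', \mb{F}')$ with glued surface $(\Sigma, \mb{F})$; by functoriality of the pushforward on homology and compatibility of the interior products in Definition \ref{defi:gluemorph}, $\Phi_\tau$ factors as the composition of two isomorphisms and is therefore itself an isomorphism. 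The principal obstacle is the arc-finding step: a careless cut can create a $(D^2, \mb{F}(1))$ piece and obstruct the induction, which is exactly why the hypothesis on the input surface and the non-boundary-parallel condition on $\ell$ are both required.
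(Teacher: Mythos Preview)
Your argument is correct and takes a genuinely different route from the paper's. The paper proceeds by first producing a \emph{quadrangulation} of $(\Sigma,\mb{F})$ in a single inductive step on the genus (reducing to disks and punctured disks), then cuts along the entire $1$-skeleton at once and argues directly---via a rank count using Lemma~\ref{lem:lrank} plus surjectivity of $(\phi_\tau)_*$---that the resulting gluing morphism is an isomorphism. Your approach instead inducts on the complexity $n(\mb{F})-2\chi(\Sigma)$, peeling off one arc at a time and invoking Proposition~\ref{prop:simpglue} for each simple gluing. This is arguably more modular: it makes transparent that the total gluing is a composite of simple gluings, a fact the paper uses anyway in the proof of Theorem~\ref{thm:isom}. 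The paper's approach, on the other hand, avoids having to discuss how gluing morphisms behave under composition.

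That last point is the one place your write-up is a bit glib. You assert that $\Phi_\tau$ ``factors as the composition of two isomorphisms'' by ``functoriality of the pushforward on homology and compatibility of the interior products,'' but the paper never states a general composition law for gluing morphisms, and indeed such a law requires care when vertices are swallowed. In your situation it does work, for a specific reason worth making explicit: every gluing appearing in your induction is (a union of) simple gluings, so no positive vertex is ever pushed into the interior; hence $\phi_\tau(\al'^+)=\al^+_\tau$, the cohomology class $\eta$ is trivial, and each $\Phi$ is literally the exterior-algebra extension of a homology pushforward. Functoriality of $(\phi_\tau)_*$ then gives the desired factorization immediately. Stating this explicitly would close the only soft spot in the argument.
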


\begin{proof}
Assume for simplicity that $\Sigma$ is connected. We claim that we can \emph{quadrangulate} $(\Sigma, \mb{F})$: that is, we can put a cellular structure on $\Sigma$ such that:

\begin{enumerate}
\item all $0$-cells lie in $\al$;

\item all $1$-cells run from a point of $\al^+$ to a point of $\al^-$; and

\item all $2$-cells have boundary composed of four 1-cells.
\end{enumerate}

We proceed by induction on the genus of $\Sigma$.
The claim is clear for sutured disks.  For sutured punctured disks, start by taking $1$-cells running from a fixed point in $\al^+$ on one boundary component to points in $\al^-$ in each of the other boundary components.  Now cut our punctured disk open along these $1$-cells.  This yields an unpunctured sutured disk, which we can quadrangulate, and this quadrangulation can be lifted to a quadrangulation of the punctured disk.  So we are done in the genus $0$ case.

Suppose we have the claim for genus $g-1$.
If the genus of $\Sigma$ is $g > 0$, take a simple closed curve $C$ with $[C]$ non-trivial and not in the image of $H_1(\partial \Sigma)$.  Let $C'$ be an arc that starts on $\al^+$ and ends on an adjacent vertex in $\al^-$, isotopic to a short arc along $\partial \Sigma$.  Isotope $C$ so that a single point touches $C'$ tangentially, and then resolve this intersection to obtain a single arc $\ell$.  Now set this to be a $1$-cell.  Cutting open $\Sigma$ along $\ell$ and appealing to the inductive hypothesis, we can find a quadrangulation on $(\Sigma, \mb{F})$ as desired.

Having shown the claim, let $(\Sigma', \mb{F}')$ be the sutured surface gotten by cutting open $(\Sigma, \mb{F})$ along the $1$-skeleton of a quadrangulation.  This surface is totally decomposed.  It only remains to show that the reverse of this cutting is a gluing $\tau$ whose morphism is an isomorphism.  

Notice that $\phi_{\tau}(\al^+) = \al^+_{\tau}$, since every vertex of $(\Sigma', \mb{F}')$ maps to a $0$-cell of the quadrangulation, and all $0$-cells are on the boundary.  In particular, $H_0(\phi_{\tau}(\al^+), \al^+_{\tau})$ is trivial; hence $\Phi_{\tau}$ is just the map 
$$ {\phi_{\tau}}_*: \Lambda\paren{H_1(\Sigma', {\al'}^+)} \rightarrow \Lambda\paren{H_1(\Sigma, {\al}^+)}.$$
Thus, it suffices to show that ${\phi_{\tau}}_*$ (as a map on homology) is an isomorphism.  It is easy to see that $H_1(\Sigma', {\al'}^+)$ and $H_1(\Sigma, {\al}^+)$ have the same ranks, using Lemma \ref{lem:lrank}.  Surjectivity is also easy to see by looking at preimages as before.  Therefore, $\Phi_{\tau}$ is an isomorphism.
\end{proof}

\begin{proof}[Proof of Theorem \ref{thm:isom}]
For simplicity, assume that all surfaces $(\Sigma, \mb{F})$ have no components isomorphic to $\paren{D^2, \mb{F}(1)}$; it is easy to modify the proof in the case where there are. 

First, consider the case of $\paren{D^2, \mb{F}(2)}$.
In this case, $V\paren{D^2,\mb{F}(2)}^X$ and $V\paren{D^2, \mb{F}(2)}^{SFH}$  are each rank 2, but considering the maps as graded maps, each homogeneous level is of rank 1, so the map $H_{D^2, \mb{F}(2)}$ is determined.  More generally, for a totally decomposed sutured surface $(\Sigma, \mb{F})$, there is clearly a unique map $H_{\Sigma, \mb{F}}$ which respects contact elements and the tensor product decomposition.

Now, by Proposition \ref{prop:cutitdown}, any sutured surface $(\Sigma, \mb{F})$ can be realized as $({\Sigma'}_{\tau_0}, {\mb{F}'}_{\tau_0})$ for a totally decomposed surface $(\Sigma', \mb{F}')$ with a gluing $\tau_0$ for which $\Phi^X_{\tau_0}$ is an isomorphism.  By an easy application of Lemma 7.9 of \cite{HKM}, the morphism 
$$ \Phi^{SFH}_{\tau_0}: V\paren{{\Sigma'}, {\mb{F}'}}^{SFH} \rightarrow V\paren{\Sigma, \mb{F}}^{SFH}$$
must be an isomorphism which respects contact elements.  Indeed, up to diffeomorphism, $\tau_0$ can be realized as a succession of simple gluings, each one of whose morphisms must be an isomorphism respecting contact elements; since there is a basis of contact elements in  $V\paren{{\Sigma'}, {\mb{F}'}}^{SFH}$, and we know $\Phi^{SFH}_{\tau_0}$ must also respect this basis, it follows that  $\Phi^{SFH}_{\tau_0}$ must coincide with the composition of morphisms.

Now, we can let
$$H_{\Sigma, \mb{F}} = \Phi^X_{\tau_0} \circ H_{\Sigma', \mb{F}'} \circ \paren{\Phi^{SFH}_{\tau_0}}^{-1}: V(\Sigma, \mb{F})^{SFH} \rightarrow V(\Sigma, \mb{F})^X.$$
This map, which we now shorten to $H$, is again an isomorphism, which respects contact elements for any dividing set on $(\Sigma, \mb{F})$ induced by $\tau_0$.

It still remains to show that $H$ respects \emph{all} contact elements.  Let $S$ denote the $1$-skeleton of the quadrangulation used to produce $(\Sigma', \mb{F}')$, as in the proof of Proposition \ref{prop:cutitdown}.  Then a dividing set $K$ on $(\Sigma, \mb{F})$ is not induced by the reverse gluing $\tau_0$ when $K$ intersects some arc of $S$ more than once.  

We proceed by induction on the number of excess intersections of $K$ with $S$ -- that is, the number of intersections of $K$ with $S$ minus the number of components of $S$ which intersect $K$.  Note that this number must be even, due to the demand that the arcs of $S$ start and end on vertices of opposite sign.  

We have shown that $H$ satisfies the desired condition when there are no excess intersections.  So, suppose that $ H\paren{c^{SFH}(K)} = c^X(K)$ for all $K$ with no more than $c$ excess intersections with $S$.  Let $K_0$ be a dividing set which has $c+2$ intersections with $S$.  Then, take a disk from $\Sigma$ that straddles a component of $S$ and intersects $K_0$ in three parallel arcs, and form $K_1$ and $K_2$ by altering $K_0$ within this disk so that the three dividing sets form a bypass triple.  After an isotopy, $K_1$ and $K_2$ will have at most $c$ excess intersections with $S$.  

So $ H\paren{c^{SFH}(K_i)} = c^X(K_i)$ for $i=1,2$.  Over $\Z_2$, it is shown in \cite{HKM} that $c^{SFH}(K_0) = c^{SFH}(K_1) + c^{SFH}(K_2)$.  Meanwhile, the corresponding relation $c^X(K_0) = c^X(K_1)+c^X(K_2)$ holds for our TQFT by Proposition \ref{prop:byp}.  Therefore, $ H\paren{c^{SFH}(K_0)} = c^X(K_0)$. 

To finish demonstrating equivalence, we need to show that the equivalence maps commute with gluing, which is equivalent to the statement that
$$ \Phi^{SFH}_{\tau} = H^{-1}_{\Sigma_{\tau}, \mb{F}_{\tau}} \circ \Phi^X_{\tau} \circ H_{\Sigma, \mb{F}}$$
for any gluing $\tau$.  To see this, first observe that for any surface $(\Sigma, \mb{F})$, there is a basis of $V(\Sigma, \mb{F})^{SFH}$ composed of the contact elements of some dividing sets $K_1, K_2, \ldots, K_{M}$, where $M = 2^{L(\Sigma, \mb{F})}$; indeed, take all the dividing sets induced by the gluing $\tau_0$ used above.  Now, we simply note that both maps take $c^{SFH}(K_i)$ to $c^{SFH}({K_i}_{\tau})$ for all $i$, since both respect contact elements.
\end{proof}

Notice that we used very little about Sutured Floer Homology in the course of the proof.  It is easy to see that in fact, the above argument shows the following.

\begin{corollary}
\label{cor:uniquez2}
Let $\mathcal{F}$ be a sutured TQFT over $\Z_2$, for which

\begin{enumerate}

\item $\Phi_{\tau}$ is an isomorphism for any simple gluing $\tau$; and

\item there are isomorphisms $f_i: V(D^2, \mb{F}(i))^{\mathcal{F}} \rightarrow V(D^2, \mb{F}(i))^X$ which respect contact elements, for $i=1,2$.

\end{enumerate}
Then $\mathcal{F}$ is equivalent to $\mathcal{X}(\Z_2)$.
\end{corollary}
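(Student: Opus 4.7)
The plan is to mirror the proof of Theorem \ref{thm:isom} essentially verbatim, replacing its SFH-specific inputs with hypotheses (1) and (2) of the Corollary. For a totally decomposed sutured surface, define $H$ by tensoring copies of $f_2$ via the tensor product axiom of Definition \ref{defi:tqft}; this respects contact elements by construction. For a general $(\Sigma, \mb{F})$ with no $(D^2, \mb{F}(1))$ components, Proposition \ref{prop:cutitdown} produces a totally decomposed $(\Sigma', \mb{F}')$ and a gluing $\tau_0$ with $\Sigma'_{\tau_0} \cong \Sigma$ and $\Phi^X_{\tau_0}$ an isomorphism. Since $\tau_0$ factors up to diffeomorphism as a composition of simple gluings, hypothesis (1) makes $\Phi^{\mathcal{F}}_{\tau_0}$ an isomorphism as well, and one sets
\[
H_{\Sigma, \mb{F}} = \Phi^X_{\tau_0} \circ H_{\Sigma', \mb{F}'} \circ \paren{\Phi^{\mathcal{F}}_{\tau_0}}^{-1}.
\]
Axiom (4) then implies that $H_{\Sigma, \mb{F}}$ respects contact elements of every dividing set induced by $\tau_0$.

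To extend respect-for-contact-elements to all dividing sets, induct on the number of excess intersections with the $1$-skeleton of the underlying quadrangulation, exactly as in Theorem \ref{thm:isom}: at each step choose a bypass triple $\{K_0, K_1, K_2\}$ whose $K_1, K_2$ have strictly fewer excess intersections, and conclude via the relation $c(K_0) + c(K_1) + c(K_2) = 0$ in both TQFTs. For $\mathcal{X}(\Z_2)$ this is Proposition \ref{prop:byp}. For $\mathcal{F}$, axiom (4) and the gluing argument from the proof of Proposition \ref{prop:byp} reduce the relation for a general bypass triple to the model relation for $\{L_1, L_2, L_3\}$ on $(D^2, \mb{F}(3))$; establishing this model relation is the main obstacle of the argument.

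The plan for the model relation is to realize $(D^2, \mb{F}(3))$ as a simple gluing of two copies of $(D^2, \mb{F}(2))$ along each of the three diagonal cuts connecting opposite-sign vertices. For each cut, hypotheses (1) and (2) together yield an isomorphism $V(D^2, \mb{F}(3))^{\mathcal{F}} \to V(D^2, \mb{F}(3))^X$ that respects contact elements of dividing sets lifting through that cut. A direct combinatorial check shows that each $L_i$ lifts through exactly two of the three cuts, while the two ``all-short'' non-isolating dividing sets on $(D^2, \mb{F}(3))$ lift through all three. Cross-referencing the three isomorphisms on their shared elements, and invoking axiom (5) for the rotational diffeomorphisms relating the three decompositions, forces the combination $c^{\mathcal{F}}(L_1) + c^{\mathcal{F}}(L_2) + c^{\mathcal{F}}(L_3)$ to correspond under all three identifications to $c^X(L_1) + c^X(L_2) + c^X(L_3)$, and hence to vanish by Proposition \ref{prop:byp}.

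Finally, the compatibility $H_{\Sigma_\tau, \mb{F}_\tau} \circ \Phi^{\mathcal{F}}_\tau = \Phi^X_\tau \circ H_{\Sigma, \mb{F}}$ for arbitrary gluings $\tau$ follows exactly as at the end of the proof of Theorem \ref{thm:isom}: $V(\Sigma, \mb{F})^{\mathcal{F}}$ carries a basis of contact elements coming from dividing sets induced by $\tau_0$, both sides of the identity respect contact elements by construction, and so they agree on this basis and hence globally.
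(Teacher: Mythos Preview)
Your overall strategy coincides with the paper's: the paper's entire proof is the single observation that the argument of Theorem~\ref{thm:isom} used nothing about $\mathcal{SFH}$ beyond hypotheses~(1) and~(2), general TQFT axioms, and the bypass relation, ``which itself can be deduced from these two properties.'' Your write-up is strictly more detailed than the paper's, and the reduction to establishing the model bypass relation on $(D^2,\mb{F}(3))$ is exactly right.

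The one place to flag is your ``cross-referencing'' step. From the three simple-gluing decompositions of $(D^2,\mb{F}(3))$ you correctly extract that each of $\{c^{\mathcal F}(S_+),c^{\mathcal F}(S_-),c^{\mathcal F}(L_i),c^{\mathcal F}(L_j)\}$ (for the three pairs $\{i,j\}\subset\{1,2,3\}$) is a basis of the rank-$4$ module $V(D^2,\mb F(3))^{\mathcal F}$. Writing $c^{\mathcal F}(L_1)=a\,c^{\mathcal F}(S_+)+b\,c^{\mathcal F}(S_-)+c\,c^{\mathcal F}(L_2)+d\,c^{\mathcal F}(L_3)$ in the first basis and imposing that the other two are bases forces $c=d=1$, hence
\[
c^{\mathcal F}(L_1)+c^{\mathcal F}(L_2)+c^{\mathcal F}(L_3)=a\,c^{\mathcal F}(S_+)+b\,c^{\mathcal F}(S_-).
\]
But the rotational diffeomorphism $\rho$ fixes $S_\pm$ and cycles the $L_i$, so the axiom-(5) automorphism $\rho_*^{\mathcal F}$ preserves both sides of this identity for \emph{any} $a,b\in\Z_2$; likewise, the relation $H_2=\rho_*^X\circ H_1\circ(\rho_*^{\mathcal F})^{-1}$ that axiom~(5) yields between your three isomorphisms only reproduces tautologies when evaluated on the $L_i$. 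So as written, your argument does not pin down $a=b=0$. Some further input (for instance, feeding the relation through an additional simple gluing out of $(D^2,\mb F(3))$, or comparing two quadrangulations of a larger disk) is needed to kill the $S_\pm$ coefficients. The paper does not supply this detail either, so this is not a divergence from the paper so much as a point both treatments leave to the reader.
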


\begin{proof}
The only facts we used about $\mathcal{SFH}$ in the proof were the above two items, general properties of any sutured TQFT, and the bypass relation, which itself can be deduced from these two properties.
\end{proof}

\begin{remark}
The argument of Theorem \ref{thm:isom} needs to be considerably strengthened if we wish to prove the analogue over $\Z$.  We can construct isomorphisms between the theories for any two surfaces, which respect bases composed of contact elements, as above; however, it will not be clear that these isomorphisms will be unique up to a single, overall sign -- which we need, since we use additive structure in showing that the isomorphisms respect all contact elements.  In the author's experience, most ``clever'' attempts to extend the argument to $\Z$ run up against this or a similar issue at some point; it seems like some careful work, at least for the case of disks, is necessary to extend the argument. 
\end{remark}

\section{Sutured Disks, Giroux's Criterion on $S^2$, and Tight Solid Tori}
\label{sec:disks}

We now study the special case of sutured disks in detail.  We also show that there is a curious interpretation of the product operation implicit in our construction.  We again work with $\Z_2$-coefficients for simplicity, although everything here can clearly be done over $\Z$ (the cost being a cluttering of notation).  At the end of this section, we apply this to a result that is useful for the classification of tight contact structures on solid tori with convex boundary.  This result could in fact be modified without much difficulty to address more general handlebodies, via the ``state transition'' picture described in \cite{HondaII}.

Much of this section should be compared with \cite{Mathews}. 

\subsection{Sutured disks}
 
The dividing sets on $(D^2, \mb{F}(N))$ are isotopy classes of embeddings of $n$ arcs into $D^2$ with boundaries lying in $F$, such that no arcs intersect and each point in $F$ is at the end of a single arc.  It is easy to see that such isotopy classes determined by the data of which points of $F$ share an arc.  The number of such isotopy classes is well known to be given by the $N$th Catalan number, $\frac{1}{N+1}\paren{\begin{array}{c}2N\\ N\end{array}}$.  

We first show directly that all non-isolating dividing sets on a sutured disk are distinguished by their contact elements; note that this fact also follows by a direct application of Theorem \ref{thm:isom} together with Proposition 7.5 of \cite{HKM}.

\begin{prop}
If $K$ and $K'$ are non-isolating dividing sets on $(D^2, \mb{F}(N))$ that are not isotopic, then $c(K) \neq c(K')$. 
\end{prop}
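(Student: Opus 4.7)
The plan is to recast the problem combinatorially and then prove injectivity by induction on $N$. A non-isolating dividing set $K$ on $(D^2, \mb{F}(N))$ gives rise to a non-crossing partition $\mathcal{P}(K)$ of $\al^+$ (ordered cyclically on $\partial D^2$), where two positive vertices are equivalent iff they lie in the same component of $R^+(K)$. Since both non-isolating dividing sets on $(D^2, \mb{F}(N))$ and non-crossing partitions of an $N$-element cyclic set are counted by the $N$-th Catalan number, and the reverse construction (realizing a non-crossing partition by chords) is well-defined up to isotopy, $K \mapsto \mathcal{P}(K)$ is a bijection. It therefore suffices to show that $\mathcal{P} \mapsto c(K_{\mathcal{P}})$ is injective.

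For a part $S = \{\al_{q_1}, \ldots, \al_{q_k}\}$ with cyclically increasing indices, the generators of $H_1$ of the corresponding positive disk are arcs between cyclically consecutive vertices of $S$, mapping to $\gamma_{S,s} = \beta_{q_s} + \beta_{q_s+2} + \cdots + \beta_{q_{s+1}-2}$ in $H_1(D^2, \al^+)$. Thus
$$c(\mathcal{P}) = \bigwedge_{S \in \mathcal{P}} \bigwedge_{s=1}^{|S|-1} \gamma_{S,s},$$
living in exterior grading $N - |\mathcal{P}|$. Expanding this wedge in the $\beta$-basis and taking the highest-indexed $\beta$ from each factor produces the monomial $\bigwedge_{(S,s)} \beta_{q_{s+1}-2}$; since the indices $q_{s+1}$ range over $\al^+ \setminus M(\mathcal{P})$ with $M(\mathcal{P}) = \{\min S : S \in \mathcal{P}\}$, they are pairwise distinct, so this is the nonzero lex-leading monomial of $c(\mathcal{P})$, and from it we recover $M(\mathcal{P})$ and hence $|\mathcal{P}|$.

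I would then induct on $N$; the base cases $N \leq 2$ are direct. For the step, assume $c(\mathcal{P}) = c(\mathcal{P}')$; the grading and leading-monomial analysis force $|\mathcal{P}| = |\mathcal{P}'|$ and $M(\mathcal{P}) = M(\mathcal{P}')$. Since any non-crossing partition contains at least one innermost part (one whose elements are cyclically consecutive in $\al^+$, contributing $\omega_S = \beta_{q_1} \wedge \beta_{q_1+2} \wedge \cdots \wedge \beta_{q_{k-1}}$, a wedge of consecutive basis elements), the strategy is to locate such an innermost $S$ from the support of $c(\mathcal{P})$, strip it off, and reduce to a partition on a smaller disk where the inductive hypothesis applies.

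The main obstacle is the stripping step: outer parts contribute $\gamma$-factors whose sums of $\beta$'s can overlap the innermost part's range, producing cross terms in the wedge expansion that may cancel over $\Z_2$. The non-crossing condition is essential here, since any two parts of $\mathcal{P}$ are either disjoint or properly nested in the cyclic order, which tightly controls these overlaps. A careful bookkeeping of monomial coefficients, recursively identifying the innermost part by its consecutive-block signature in the support of $c(\mathcal{P})$, should suffice to carry the induction through. As noted by the authors, the proposition also follows from Theorem \ref{thm:isom} together with Proposition 7.5 of \cite{HKM}; the direct combinatorial argument has the merit of making the exterior-algebraic encoding of non-crossing partitions explicit.
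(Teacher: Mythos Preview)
Your combinatorial setup is sound: the bijection with non-crossing partitions of $\al^+$, the formula $c(\mathcal{P})=\bigwedge_{S,s}\gamma_{S,s}$, and the leading-monomial extraction of $M(\mathcal{P})$ all check out. But the argument stops at the hard part. You yourself flag the stripping step as ``the main obstacle'' and then assert that careful bookkeeping ``should suffice'' without carrying it out. Since $M(\mathcal{P})$ does not determine $\mathcal{P}$ (on $N=4$ the partitions $\{\{1,5\},\{3\},\{7\}\}$ and $\{\{1\},\{3,5\},\{7\}\}$ share $M=\{1,3,7\}$), and since singleton blocks contribute no wedge factor whatsoever, you have supplied no mechanism for locating an innermost block from $c(\mathcal{P})$ and reducing. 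As written this is an outline, not a proof.

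Your framework \emph{can} be completed without the induction: $c(K)$ is a nonzero decomposable element, hence determines the subspace $H_1(R^+(K),\al^+)\subset H_1(D^2,\al^+)$ up to nothing; under the boundary isomorphism $H_1(D^2,\al^+)\cong\widetilde H_0(\al^+)$ this subspace is exactly $\bigoplus_{S\in\mathcal{P}}\widetilde H_0(S)$, and one recovers $\mathcal{P}$ by checking which differences $[\al_p]-[\al_q]$ lie in it.

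The paper's proof is entirely different and far shorter. If the arc of $K$ from $F_1$ ends at $F_i$ while that of $K'$ ends at $F_j$ with $i<j$, one runs a curve $\gamma$ from $\al_{2N}$ to $\al_i$ inside $R^-(K)$, giving a class in $H_1(D^2,\al^-)$ with $\iota_\gamma c(K)=0$; the analogous curve $\gamma'\subset R^+(K')$ from $\al_1$ to $\al_{j-1}$ meets $\gamma$ transversally once, and extending $\gamma'$ to a basis of $H_1(R^+(K'),\al^+)$ shows $\iota_\gamma c(K')\neq 0$. One interior product separates the two elements; no induction or monomial bookkeeping is needed.
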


\begin{proof}
Without loss of generality, assume that $K$ has an arc running from $F_1$ to $F_i$, and that $K'$ has an arc running from $F_1$ to $F_j$, $i < j$.  Consider the oriented curve $\gamma_0$ running from $\al_{2N}$ to $F_1$, along the aforementioned arc of $K$, and then from $F_i$ to $\al_i$.  This curve can be isotoped to an oriented curve $\gamma$ from $\al_{2N}$ to $\al_i$ which lies entirely within $R^-(K)$.  So $\iota_{\gamma}c(K) = 0$.

On the other hand, there is a curve $\gamma'$ from $\al_1$ to $\al_{j-1}$, constructed in a similiar fashion,  which lies within $R^+(K')$ and intersects $\gamma$ once transversally, since $1<i<j<2N$.  Complete $\gamma'$ to a basis $\{\gamma', b_2, \ldots, b_k\}$ of $H_1\paren{R^+(K'),\al^+}$, which we make think of as a subspace of $H_1(D^2, \al^+)$.  Then $c(K') = \gamma' \wedge b_2 \wedge \ldots \wedge b_k$; and $\iota_{\gamma}c(K') \neq 0$.
\end{proof}

\subsection{A relation to Giroux's criterion}

Fix a value of $N$.  Let $f_1$ be an orientation-preserving embedding of $(D^2, \mb{F}(N))$ into $S^2$.  Then, let $f_2$ be an orientation-\emph{reversing} embedding of $(D^2, \mb{F}(N))$ into $S^2$ which agrees with $f_1$ on the boundary, so that the images cover $S^2$.

\begin{defi}
Given two dividing sets $K_1, K_2$ on $(D^2, \mb{F}(N))$, consider the multi-curve $f_1(K_1) \cup_{f_1(\al^+)} f_2(K_2)$ on $S^2$, composed of some number of closed curves.  We say that $K_1$ and $K_2$ are \emph{matchable} if this multi-curve is in fact connected.
\end{defi}

This definition is motivated by Giroux's criterion for $S^2$ \cite{Giroux}, which states that a dividing set on $S^2$ is the dividing set of a convex sphere in a contact 3-manifold with a tight neighborhood if and only if the dividing set is connected.    

\begin{remark}
In \cite{Mathews}, a similar but slightly more precise notion is referred to as \emph{stackability}.  The difference is largely one of perspective: stackability is defined in reference to contact structures; we think of matchability as a combinatorial condition, which we will directly relate to Giroux's criterion below. 
\end{remark}

\begin{theorem}
\label{thm:match}
Let $\Omega^+$ be the unique generator of $V^{top}(D^2, \mb{F}(N)).$
Two dividing sets $K_1, K_2$ on $(D^2, \mb{F}(N))$ are matchable if and only if $c(K_1) \wedge c(K_2) = \Omega^+$. 
\end{theorem}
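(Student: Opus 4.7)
The plan is to translate the identity $c(K_1) \wedge c(K_2) = \Omega^+$ into a linear-algebraic statement about the subspaces $V_i := H_1(R^+(K_i), \al^+)$ inside $V := H_1(D^2, \al^+)$, and then to match that condition with matchability via a global Euler-characteristic argument on $S^2$.

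I would first reduce to the case where both $K_i$ are non-isolating. If some $K_i$ has a closed component then $c(K_i) = 0$ by Proposition \ref{prop:nontriv}, while the multicurve $M := f_1(K_1) \cup f_2(K_2)$ picks up that component as a standalone circle and therefore has at least two components, so both sides of the asserted equivalence fail. Assume henceforth that each $K_i$ is non-isolating. Then Proposition \ref{prop:isocomps} yields injections $V_i \hookrightarrow V$, and by construction $c(K_i)$ is the image of a generator of $\Lambda^{\mathrm{top}}(V_i)$. Consequently $c(K_1) \wedge c(K_2) \neq 0$ if and only if $V_1 \oplus V_2 = V$; over $\Z_2$ the top group $V^{\mathrm{top}}(D^2, \mb{F}(N))$ is rank one and generated by $\Omega^+$, so non-vanishing is automatically equivalent to the desired equality.

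Next I would compute $V_1 \cap V_2$ combinatorially. The long exact sequence of $(R^+(K_i), \al^+)$ identifies $V_i$ with the kernel of $H_0(\al^+) \to H_0(R^+(K_i))$, the map sending each $v_k \in \al^+$ to its positive $K_i$-region; in particular $\dim V_i = N - p_i$, where $p_i$ is the number of positive components of $K_i$. Set up the bipartite graph $G^+$ whose $p_1 + p_2$ vertices are the positive regions of $K_1$ and of $K_2$, and whose $N$ edges are indexed by the points of $\al^+$ (each $v_k$ joining its $K_1$-region to its $K_2$-region). Then $V_1 \cap V_2$ is the kernel over $\Z_2$ of the unsigned vertex–edge incidence map of $G^+$, and the standard rank formula gives $\dim(V_1 \cap V_2) = N - (p_1 + p_2) + p^+$, where $p^+$ denotes the number of components of $G^+$. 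A dimension count then shows that $V_1 \oplus V_2 = V$ is equivalent to the two numerical conditions (i) $p_1 + p_2 = N + 1$ and (ii) $p^+ = 1$.

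Finally I would match these two conditions with matchability. A component of $G^+$ is precisely a component of the positive region $R^+ \subset S^2$ of $S^2 \setminus M$, because $R^+$ is assembled by gluing the $p_1 + p_2$ positive-region disks along the $N$ positive boundary arcs of $\partial D^2$ (one neighborhood per $\al^+$-point); matchability is then equivalent to $p^+ = p^- = 1$. An inclusion–exclusion computation yields $\chi(R^+) = (p_1 + p_2) - N$. Under (i) and (ii), the connected subsurface $R^+ \subset S^2$ has $\chi = 1$ and is therefore a closed disk; by Jordan–Schoenflies its complement $R^-$ is also a disk, so $M$ is a single circle, giving matchability. Conversely, matchability makes $R^\pm$ both disks, so $p^\pm = 1$ and $\chi(R^+) = 1$ recovers $p_1 + p_2 = N + 1$. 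I expect the main obstacle to be the careful identification of $V_1 \cap V_2$ with the incidence-kernel of $G^+$; once that bookkeeping is set up, the remaining steps reduce to the standard incidence rank formula over $\Z_2$ and an application of Jordan–Schoenflies.
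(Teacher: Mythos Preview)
Your proposal is correct and shares the paper's overall architecture: both reduce the wedge identity to the internal direct-sum statement $V_1 \oplus V_2 = V$ inside $H_1(D^2,\al^+)$, and both translate matchability into the condition that the glued positive region $S^+ = f_1(R^+(K_1)) \cup_{f_1(A^+)} f_2(R^+(K_2))$ is a disk (connected with $\chi = 1$). The difference lies in how the bridge between these two statements is built. The paper argues geometrically on the \emph{sum} side: it shows directly, by decomposing and reassembling paths, that $S^+$ is connected if and only if $V_1 + V_2 = V$, and separately matches the Euler-characteristic condition with the rank equation. You instead work on the \emph{intersection} side: realizing each $V_i$ as the kernel of $H_0(\al^+) \to H_0(R^+(K_i))$ (using that on a disk all regions are disks, so $H_1(R^+(K_i))=0$), you identify $V_1 \cap V_2$ with the $\Z_2$-cycle space of the bipartite incidence graph $G^+$, and read off both numerical conditions from the standard rank formula. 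Your route is more combinatorial and makes the two conditions $p_1+p_2=N+1$ and $p^+=1$ pop out cleanly from linear algebra, at the cost of the extra bookkeeping you flag; the paper's path argument is more hands-on but avoids introducing the auxiliary graph. Both invoke the same topological input at the end (Jordan--Schoenflies, or equivalently the paper's unproved assertion that $S^+$ connected with $\chi=1$ characterizes matchability).
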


\begin{proof}
Of course, we may assume that $K_1$ and $K_2$ are non-isolating, and we may then think of $H_1(R^+(K_1), \al^+)$ and $H_1(R^+(K_2), \al^+)$ as subgroups $H_1(D^2, \al^+)$.  The equation holding is equivalent to $H_1(D^2, \al^+)$ being the direct sum of these subgroups.

The dividing sets $K_1$ and $K_2$ will be matchable if and only if the surface $S^+ = f_1\paren{R^+(K_1)} \cup_{f_1(A^+)} f_2\paren{R^+(K_2)}$ is connected and has Euler characteristic 1.  Since $S^+$ is formed by gluing the images of the two regions along $N$ disjoint arcs, we have $\chi\paren{S^+} = \chi\paren{R^+(K_1)} 
+ \chi\paren{R^+(K_2)} - N$.  For $i = 1,2$, $\chi\paren{R^+(K_i)} = N - \mbox{rank}\paren{H_1(R^+(K_i), \al^+)}$; since the rank of $H_1(D^2, \al^+)$ is $N-1$, the Euler characteristic condition is met if and only if 
$$ \mbox{rank}\paren{H_1(R^+(K_1), \al^+)} + \mbox{rank}\paren{H_1(R^+(K_1), \al^+)} = \mbox{rank}\paren{H_1(D^2, \al^+)}.$$

Every component of $S^+$ contains the image of some positive vertex, and the image of every positive vertex lies in some component of $S^+$.  Thus, $S^+$ is connected if and only if every two positive vertices have images that are connected by a path within $S^+$.  We claim that this is equivalent to 
$$ H_1(R^+(K_1), \al^+) + H_1(R^+(K_2), \al^+) = H_1(D^2, \al^+).$$

Suppose that $S^+$ is connected.  Choose two positive vertices $\al_i$ and $\al_j$, and let $\ell$ be such a path in $S^+$ connecting $f_1(\al_i)$ and $f_1(\al_j)$.  After an isotopy, we can decompose $\ell$ as the union of oriented paths $\ell_1, \ldots, \ell_m$ and $\ell'_1, \ldots, \ell'_n$, such that the $\ell_i$ all lie entirely within $f_1(R^+(K_1))$ and have endpoints in $f_1(\al^+)$, and likewise the $\ell'_i$ all lie entirely within $f_2(R^+(K_2))$ and have endpoints in $f_1(\al^+)$.  Then the sum of the classes of the preimages of the $\ell_i$ and $\ell'_i$ in $H_1(D^2, \al^+)$ will be equal to the class of a path running directly from $\al_i$ to $\al_j$.  So every class in $H_1(D^2, \al^+)$ can be written as the sum of such paths, meaning that $H_1(D^2, \al^+)$ is equal to the sum of the two subgroups.

On the other hand, suppose that $H_1(D^2, \al^+)$ is equal to the sum of the two subgroups.  Take a connected path between any two vertices $\al_i$ and $\al_j$, and write it as the sum of a class in $H_1(R^+(K_1), \al^+)$ and a class in $H_1(R^+(K_2), \al^+)$.  Each of these classes may be a disjoint union of paths between different points of $\al^+$.  Considering the boundary map from $H_1(D^2, \al^+)$ to $H_0(\al^+)$, however, there must be a collection of connected paths $\ell_1, \ldots, \ell_m$ such the head of each path is the tail of the next, with $\ell_1$ starting at $\al_i$ and $\ell_m$ ending at $\al_j$, such that each $\ell_k$ lies within $R^+(K_1)$ or within $R^+(K_2)$.  Using these paths, we can easily concoct a curve in $S^+$ connecting $f_1(\al_i)$ and $f_1(\al_j)$. 
 
So connectedness is equivalent to $H_1(D^2, \al^+)$ being the sum of the subspaces, and this sum is a direct sum if and only if the Euler characteristic condition also holds.  Thus $c(K_1) \wedge c(K_2) = \Omega^+$ is equivalent to matchability.
\end{proof}

\subsection{Negative contact elements}
\label{sec:negative}

We wish to show that if we try to define contact elements using $(R^-(K), \al^-)$ in place of $(R^+(K), \al^+)$, we get an invariant that contains essentially equivalent information.  This result seems interesting in its own right, but we will also need it for our application to contact structures on solid tori.

We define the negative contact element in an exactly similar manner to the positive contact element, other than thinking of it as living in the dual of the previously defined sutured surface group.  So, given a dividing set $K$ on $(\Sigma, \mb{F})$, let
$$i^-_K: \Lambda\paren{H_1(R^-(K), \al^-;\Z_2)} \rightarrow \Lambda\paren{H_1(\Sigma, \al^-;\Z_2)} \cong V(\Sigma, \mb{F})^*$$
be the map induced by inclusion; let $\omega^-$ be the unique generator of $\Lambda^{top}\paren{H_1(R^-(K), \al^-)}$; and let $L^-(K) = n(\mb{F}) - \chi\paren{R^-(K)}$.

\begin{defi}
\label{defi:neg}
The \emph{negative contact element} of a dividing set $K$ on a sutured surface $(\Sigma, \mb{F})$ is given by 
$$c^-(K) = \pi_{L^-(K)} \circ i^-_K(\omega^-) \in V(\Sigma, \mb{F})^*.$$
\end{defi} 

We can of course extend this to $\Z$ by choosing homology orientations.

We will refer to the contact element $c(K) \in V(\Sigma, \mb{F})$ that we have been using throughout as the \emph{positive} contact element, and will write it as $c^+(K)$ for the sake of clarity.

\begin{lemma}
\label{lem:ranks}
If $K$ is non-isolating, then
$$ \mbox{rank}\paren{H_1(R^+(K), \al^+)} + \mbox{rank}\paren{H_1(R^-(K), \al^-)} = \mbox{rank}\paren{H_1(\Sigma, \al^+)}.$$
\end{lemma}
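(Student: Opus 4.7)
The plan is to reduce everything to an Euler characteristic computation. The three ranks appearing in the statement can each be rewritten via results already established in the paper. By Lemma \ref{lem:lrank} applied to $(\Sigma, \mb{F})$, we have $\mathrm{rank}\, H_1(\Sigma, \al^+) = n(\mb{F}) - \chi(\Sigma)$. Since $K$ is non-isolating, Proposition \ref{prop:isocomps} gives $\mathrm{rank}\, H_1(R^+(K), \al^+) = n(\mb{F}) - \chi(R^+(K))$, and the analogue with signs reversed (proved by the identical long-exact-sequence argument, which I would state as a brief remark rather than re-deriving) gives $\mathrm{rank}\, H_1(R^-(K), \al^-) = n(\mb{F}) - \chi(R^-(K))$.

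Once these three identifications are in place, the desired equality becomes purely a statement about Euler characteristics, namely
$$\chi(R^+(K)) + \chi(R^-(K)) = \chi(\Sigma) + n(\mb{F}).$$
I would prove this by inclusion-exclusion. Since $\Sigma = R^+(K) \cup R^-(K)$ and $R^+(K) \cap R^-(K) = K$, one has $\chi(\Sigma) = \chi(R^+(K)) + \chi(R^-(K)) - \chi(K)$, so the task reduces to showing $\chi(K) = n(\mb{F})$.

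For this last step, recall that $K$ is a properly embedded oriented $1$-manifold with $\partial K = F$. Thus $K$ decomposes as a disjoint union of circles (each contributing $0$ to $\chi$) and arcs (each contributing $1$). Since each arc has two endpoints in $F$ and $|F| = 2n(\mb{F})$, the number of arcs is exactly $n(\mb{F})$, giving $\chi(K) = n(\mb{F})$ as required. Assembling the pieces:
$$\mathrm{rank}\, H_1(R^+(K), \al^+) + \mathrm{rank}\, H_1(R^-(K), \al^-) = 2n(\mb{F}) - \chi(R^+(K)) - \chi(R^-(K)) = n(\mb{F}) - \chi(\Sigma),$$
which equals $\mathrm{rank}\, H_1(\Sigma, \al^+)$.

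There is no real obstacle here; this is essentially a bookkeeping lemma. The only subtlety worth flagging is that one must use the non-isolating hypothesis twice, once for $R^+(K)$ and once for $R^-(K)$, to eliminate the $I^\pm(K)$ terms from Proposition \ref{prop:isocomps}; without this, the claimed rank formula would be off by $I^+(K) + I^-(K)$.
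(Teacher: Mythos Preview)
Your argument is correct. It is, however, genuinely different from the paper's. The paper proves the rank identity by extracting a short exact sequence from the long exact sequence of the triple $(\Sigma, R^+(K), \al^+)$: non-isolating forces injectivity on the left and vanishing of $H_0(R^+(K),\al^+)$ on the right, and the middle quotient $H_1(\Sigma, R^+(K))$ is identified with $H^1(R^-(K),\al^-)$ via excision and Poincar\'e duality. You instead cash in Lemma~\ref{lem:lrank} and Proposition~\ref{prop:isocomps} (plus its sign-reversed analogue) to turn all three ranks into Euler characteristics, and then close with the inclusion--exclusion identity $\chi(\Sigma)=\chi(R^+)+\chi(R^-)-\chi(K)$ together with the arc count $\chi(K)=n(\mb{F})$. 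Your route is more elementary in that it avoids duality and excision entirely; the paper's route buys a bit more, namely an actual short exact sequence (hence a canonical identification of $H_1(\Sigma,\al^+)/H_1(R^+(K),\al^+)$ with $H^1(R^-(K),\al^-)$), which is morally what gets used again in Proposition~\ref{prop:neg}.
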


\begin{proof}
We again examine part of the long exact sequence of the triple $(\Sigma, R^+(K), \al^+)$,
$$  H_1(R^+(K), \al^+) \rightarrow H_1(\Sigma, \al^+) \rightarrow H_1\paren{\Sigma, R^+(K)}  \rightarrow H_0(R^+(K), \al^+). $$
Since $K$ is non-isolating, the first map is injective (by Proposition \ref{prop:isocomps}), and the last group is trivial.  
Using excision and Poincar\'{e} duality,
$$ H_1\paren{\Sigma, R^+(K)} \cong H_1(R^-(K), K) \cong H^1(R^-(K), \al^-).$$
The conclusion follows.
\end{proof}

To relate the negative and positive formulations, let $\Omega^+$ be the unique generator of $V^{top}(\Sigma, \mb{F})$; likewise, let $\Omega^-$ be the generator of $\Lambda^{top}\paren{H_1(\Sigma, \al^-; \Z_2)} = V^{top}(\Sigma, \mb{F})^*$.  These elements satisfy the equation
$$\langle \Omega^- | \Omega^+ \rangle = 1.$$ 

It is easy to see that the map taking $x \in V(\Sigma, \mb{F})$ to $\iota_x \Omega^- \in V(\Sigma, \mb{F})^*$ is a linear isomorphism, as is the map taking $x \in V(\Sigma, \mb{F})^*$ to $\iota_x \Omega^+ \in \paren{V(\Sigma, \mb{F})^*}^* = V(\Sigma, \mb{F})$. The next Proposition asserts that these isomorphisms trade positive and negative contact elements.  

\begin{prop}
\label{prop:neg}
$c^+(K) = \iota_{c^-(K)}\Omega^+$ and $c^-(K) = \iota_{c^+(K)}\Omega^-$.
\end{prop}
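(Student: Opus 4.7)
The plan is to reduce both identities to a purely linear-algebraic statement about complementary subspaces inside $H_1(\Sigma,\al^+)$ and its dual. I will set up an adapted basis that diagonalizes the positive/negative splitting, and then apply the decomposable interior-product calculation already recorded in the proof of Lemma \ref{lem:images}. The two identities are symmetric under the canonical isomorphism $(V^*)^*\cong V$, so I will prove only the first and remark that the second follows verbatim.

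The key geometric step is to show that, under the Poincar\'e duality identification $H_1(\Sigma,\al^-)\cong H_1(\Sigma,\al^+)^*$, the subspace $H_1(R^-(K),\al^-)$ maps precisely to the annihilator of $H_1(R^+(K),\al^+)$. One containment is immediate from the intersection-pairing interpretation of the duality: an arc representing a class in $H_1(R^+(K),\al^+)$ and an arc representing a class in $H_1(R^-(K),\al^-)$ lie in disjoint regions (they can meet only along $K$, and a small pushoff removes any such intersection), so their algebraic intersection vanishes. For equality I will invoke Lemma \ref{lem:ranks}, which guarantees that the rank of $H_1(R^-(K),\al^-)$ equals the codimension of $H_1(R^+(K),\al^+)$, together with Proposition \ref{prop:isocomps}: since $K$ is non-isolating, the inclusion-induced map $H_1(R^-(K),\al^-)\to H_1(\Sigma,\al^-)$ is injective, so the image has the full annihilator dimension. (The isolating case is already trivial because both $c^+(K)$ and $c^-(K)$ vanish.)

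With this identification in hand, pick a basis $\{b_1,\dots,b_r\}$ of $H_1(R^+(K),\al^+)$ and extend to a basis $\{b_1,\dots,b_{r+s}\}$ of $H_1(\Sigma,\al^+)$; let $\{c_1,\dots,c_{r+s}\}$ be the dual basis inside $H_1(\Sigma,\al^-)$. Then $c^+(K)=b_{[1,r]}$, $\Omega^+=b_{[1,r+s]}$, $\Omega^-=c_{[1,r+s]}$, and by the previous paragraph $c^-(K)=c_{[r+1,r+s]}$. The explicit formula
$$\iota_{c_{[k+1,\ell]}}b_I=\begin{cases}b_{I\setminus[k+1,\ell]},& [k+1,\ell]\subset I,\\ 0,& \text{otherwise,}\end{cases}$$
recorded in the proof of Lemma \ref{lem:images}, then gives $\iota_{c^-(K)}\Omega^+=b_{[1,r]}=c^+(K)$ over $\Z_2$. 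Swapping the roles of $M$ and $M^*$ (which is legitimate since $(M^*)^*\cong M$) yields the symmetric identity $\iota_{c^+(K)}\Omega^-=c^-(K)$.

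The only real work is the annihilator identification, which is where all the topology enters; everything else is bookkeeping in the chosen basis. One subtle point to handle carefully is that the isomorphism $H_1(\Sigma,\al^-)\cong H_1(\Sigma,\al^+)^*$ is canonically given by the intersection pairing rather than by the algebraic excision/duality chain used in Lemma \ref{lem:ranks}; I will therefore briefly verify that the two identifications agree up to sign, so that the annihilator computation is compatible with the basis computation. Over $\Z$ one would additionally have to track homology orientations of $R^+(K)$, $R^-(K)$, and $\Sigma$ to pin down signs, but over $\Z_2$ this vanishes and the argument is clean.
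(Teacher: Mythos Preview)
Your proposal is correct and follows essentially the same approach as the paper: both reduce to showing that $H_1(R^-(K),\al^-)$, viewed inside $H_1(\Sigma,\al^-)\cong H_1(\Sigma,\al^+)^*$, is exactly the annihilator of $H_1(R^+(K),\al^+)$, and then finish with a basis computation of the interior product. The only organizational difference is that you establish the annihilator identification first and then choose a basis adapted to $H_1(R^+(K),\al^+)$, whereas the paper chooses a basis adapted to $H_1(R^-(K),\al^-)$ first and proves the annihilator statement at the end; the ingredients (disjointness of representatives for the containment, Lemma~\ref{lem:ranks} plus Proposition~\ref{prop:isocomps} for equality of ranks) are identical.
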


\begin{proof}
We prove the first result; the second is similar.  Of course, both results are clear if $K$ is isolating by Proposition \ref{prop:isocomps} (which clearly applies equally replacing $+$ with $-$), so we assume going forward that $K$ is non-isolating.  In particular, we may view $H_1(R^+(K), \al^+)$ and $H_1(R^-(K), \al^-)$ as subspaces of $H_1(\Sigma, \al^+)$ and $H_1(\Sigma, \al^-)$, respectively.  
Letting $n_+$ and $n_-$ be the ranks of $H_1(R^+(K), \al^+)$ and $H_1(R^-(K), \al^-)$, respectively, Lemma \ref{lem:ranks} then says that the rank of $H_1(\Sigma, \al^+)$ is $n_+ + n_-$, which we will denote by $n$.

Since $K$ is non-isolating, we may choose a basis $\{b_1, \ldots, b_{n}\}$ of $H_1(\Sigma, \al^+)$, with dual basis $\{c_1, \ldots, c_n\}$ of $H_1(\Sigma, \al^-)$, such that $c^-(K) = c_{[1, n_+]}$ (recall notation from Section \ref{sec:extnot}, and note that we don't rule out $c_{\emptyset} = 1$).  Of course, $b_{[1, n]} = \Omega^+$ and $c_{[1,n]} = \Omega^-$, and the sets of elements $\{b_I | I \subset [1,n]\}$ and $\{c_I | I \subset [1,n]\}$ form dual bases of $V(\Sigma, \mb{F})$ and $V(\Sigma, \mb{F})^*$ respectively.  
For a subset $I$ of $[1,n]$, 
$$ \langle c_I | \iota_{c^-(K)}\Omega^+ \rangle = \langle c^-(K) \wedge c_I | \Omega^+ \rangle = \langle c_{[1, n_+]} \wedge c_I | \Omega^+ \rangle = \left\{\begin{array}{ll}
1, & I = [n_+ + 1, n]\\
0, & \mbox{otherwise}.
\end{array} \right.$$ 
Thus, $\iota_{c^-(K)}\Omega^+ = b_{[n_+ + 1, n]}$, which we must show is equal to $c^+(K)$.  

Equivalently, we may say that $H_1(R^-(K),\al^-)$ is the span of $\{c_1, \ldots, c_{n_+}\}$, and that we must show that $H_1(R^+(K), \al^+)$ is spanned by $\{b_{n_+ + 1}, \ldots, b_n\}$ -- that is, that $H_1(R^+(K), \al^+)$ is exactly the subgroup of $H_1(\Sigma, \al^+)$ annihilated by $H_1(R^-(K), \al^-)$.  Of course, any element of $H_1(R^+(K), \al^+)$ can be isotoped not to intersect any element of $H_1(R^-(K), \al^-)$.  Furthermore, the rank of the subgroup of $H_1(\Sigma, \al^+)$ annihilated by $H_1(R^-(K), \al^-)$ will be $n - n_- = n_+$.  Thus, $H_1(R^+(K), \al^+)$ is exactly the annihilated subgroup, as needed.
\end{proof}

\subsection{Contact structures on solid tori}

Let $(T, \xi)$ be a contact solid torus with convex boundary.  Let $\mu$ be a curve on the boundary which bounds a meridional disk, and $\lambda$ a longitude of the boundary, with the two oriented so that $\mu \cdot \lambda = 1$.  If we assume that there is a neighborhood of $\partial T$ that is tight, the dividing set $\Gamma \subset \partial T$ will be $2n$ parallel curves, each representing some class $p\mu + q\lambda$ for relatively prime $p,q$ and $q > 0$.   

Let $D \subset T$ be a properly embedded meridional disk with $\partial D \subset \partial T$; by perhaps performing a small isotopy, arrange that $\partial D$ intersects $\Gamma$ tranversally, and that $D$ is a convex surface with Legendrian boundary.  Take a tubular neighborhood $D \times [0,1]$ of $D$, where the vector field in the $[0,1]$ direction is a contact vector field; denote by $B$ the space obtained by removing $D \times (0,1)$ from the torus (i.e. $B$ is just $T$ cut along $D$).  Topologically, $B$ will be a 3-ball; the boundary of $B$ is composed of $D \times \{0\}$, $-(D \times \{1\})$, and the annulus $\partial T \setminus (\partial D \times (0,1))$.  

 Consider the dividing set $K$ on $D$, as well as the dividing set $\Gamma'$ on the annulus gotten by cutting open $\Gamma$.  Note that $K \cap \partial D$ and $\Gamma' \cap \partial D$ alternate as one goes around $\partial D$; see for example \cite{Honda}. By the edge-rounding lemma of \cite{Honda}, we may round the edges joining the three components of $\partial B$ to form a smooth, convex $S^2$ boundary, and the dividing sets on the three components get modified as in Figure \ref{fig:edgeround}: the dividing curves are smoothed by having them turn towards the right as they pass from one component to another.

We may identify $D$ with the sutured surface $\paren{D^2, \mb{F}(nq)}$, so that the set of sutures $F^{\pm}$ is $K \cap \partial D$, and the set of vertices $\al^{\pm}$ is $\Gamma' \cap \partial D$.  Fix such an identification with $\paren{D^2, \mb{F}(nq)}$.  Recall that we number the vertices we encounter going counterclockwise along $\partial D^2$ by $\al_1, \al_2, \ldots, \al_{2nq}$, where $\al_1$ is a positive vertex.  For $i \in \Z$, let $\beta_i$ be an arc in $D^2$ with $\partial \beta_i = \al_{i+2} - \al_i$, where $\al_i = \al_{j}$ if $i \equiv j \,(\mbox{mod} 2nq)$. As we have already mentioned, $\{\beta_1, \beta_3, \ldots, \beta_{2nq - 3}\}$ will be a basis for $H_1(D^2, \al^+)$; on the other hand, $\beta_i$ for $i$ even will represent an element of $H_1(D^2, \al^-) = H^1(D^2, \al^+)$ instead of $H_1(D^2, \al^+)$.  

For an \emph{odd} number $j$, let $\varphi_j: H_1(D^2, \al^+) \rightarrow H^1(D^2, \al^+)$ be the map which sends $\beta_i$ to $\beta_{i+j}$ -- we emphasize that this turns homology elements into cohomology elements.

\begin{figure}[htbp]
	\centering
		\includegraphics[width=0.6\textwidth]{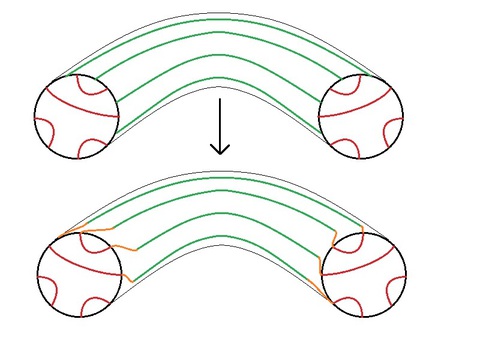}
	\caption{The top figure shows the solid torus $T$ cut open along a neighborhood of $D$.  The left disk represents $D \times \{0\}$, while the right disk is $-(D \times \{1\})$.  The green curves are $\Gamma$, the red curves are the images of the dividing set $K$ on $D$.  The bottom figure shows the edge-rounded dividing set on $\partial B$, as prescribed by the lemma of \cite{Honda}.  Note that we have pushed all the alterations to the dividing sets onto $\partial T \setminus \partial D$.}
	\label{fig:edgeround}
\end{figure}

\begin{theorem}
The restriction of $\xi$ to a neighborhood of $\partial B$ is tight if and only if the dividing set $K$ on $D$ satisfies
$$ \langle \varphi_{2np+1}\paren{c(K)} | c(K) \rangle = 1.$$
\end{theorem}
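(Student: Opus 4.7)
The plan is to combine Giroux's criterion for convex $2$-spheres with Theorem \ref{thm:match} and the duality of Proposition \ref{prop:neg}. Since $\partial B$ is a convex $S^2$, Giroux's criterion says a neighborhood of $\partial B$ is tight if and only if the dividing set on $\partial B$ is connected. Thus the task reduces to rephrasing connectedness of this dividing set as the stated algebraic condition.

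I will decompose $\partial B$ as $D_1 \cup D_2$, where $D_1 = D \times \{0\}$ and $D_2$ is the disk $A \cup (-(D \times \{1\}))$ smoothed at the two corner circles. Each is a disk carrying $2nq$ sutures along the common boundary $\partial D \times \{0\}$: $D_1$ inherits the dividing set $K$, while $D_2$ inherits a dividing set $K^{\sharp}$ obtained by splicing $K$ on $-(D \times \{1\})$ with the arcs of $\Gamma \cap A$ via the edge-rounding rule. After identifying both disks with the standard sutured disk $\paren{D^2, \mb{F}(nq)}$ (by an orientation-preserving and an orientation-reversing diffeomorphism, respectively, as matchability requires), Theorem \ref{thm:match} tells us that the dividing set on $\partial B$ is connected if and only if $c(K) \wedge c(K^{\sharp}) = \Omega^+$. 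Pairing with $\Omega^-$ and applying Proposition \ref{prop:neg} rewrites this as $\langle c^-(K^{\sharp}) \,|\, c(K) \rangle = 1$.

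The remaining step, which I expect to be the main obstacle, is to verify $c^-(K^{\sharp}) = \varphi_{2np+1}(c(K))$, so that the condition takes the stated form. This amounts to showing that, under the identification of $D_2$ with $\paren{D^2, \mb{F}(nq)}$, the dividing set $K^{\sharp}$ is $K$ rotated by $2np+1$ sutures along the boundary. The shift of $2np$ arises from the $(p,q)$-arcs on $A$: a single arc advances by $p/q$ of a meridional length, which is $2np$ of the $2nq$ evenly spaced sutures on $\partial D \times \{0\}$. The remaining $+1$ is the contribution of the edge-rounding rule (turning right at each corner). Because $2np+1$ is odd, this boundary rotation exchanges $\al^+$ with $\al^-$ and sends $R^+(K)$ to $R^-(K^{\sharp})$; the induced map on relative homology therefore takes $c^+(K)$ to $c^-(K^{\sharp})$, and this induced map is precisely $\varphi_{2np+1}: H_1(D^2, \al^+) \rightarrow H^1(D^2, \al^+) = H_1(D^2, \al^-)$. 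The delicate point is carrying out this geometric bookkeeping correctly: ensuring that the edge-rounding contributes exactly $+1$ with the proper sign and orientation conventions, since an off-by-one here would yield the wrong index.
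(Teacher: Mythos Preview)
Your proposal is correct and follows essentially the same route as the paper: Giroux's criterion reduces tightness to connectedness of the dividing set on $\partial B$, Theorem \ref{thm:match} converts connectedness into a wedge-product condition, and Proposition \ref{prop:neg} together with the identification of $K^{\sharp}$ as a $(2np+1)$-step rotation of $K$ yields the stated pairing. The only cosmetic difference is that the paper applies Theorem \ref{thm:match} in its negative-contact-element form (writing $c^-(K)\wedge c^-(K') = \Omega^-$) before unwinding, whereas you use the positive form and then dualize; over $\Z_2$ these are the same computation.
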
 

\begin{proof}
The restriction is tight if the rounded dividing set on $\partial B$ is connected.  We would like to analyze this condition using Theorem \ref{thm:match}.  
So, let $f_1$ be the previously fixed orientation-preserving embedding of $\paren{D^2, \mb{F}(nq)}$ onto $D\times \{0\}$, and let $f_2$ be an orientation-\emph{reversing} embedding of $\paren{D^2, \mb{F}(nq)}$ onto $\paren{\partial T \setminus (\partial D \times (0,1))} \cup -(D \times \{1\})$, as in the definition of matchable dividing sets, so that the images of the two embeddings cover $\partial B$.  Then, choose a dividing set $K'$ on $\paren{D^2, \mb{F}(nq)}$ such that the smoothed dividing set on $\partial B$ is gotten by gluing the image of $K$ under $f_1$ to the image of $K'$ under $f_2$.  So, $K'$ is just $K$ rotated, including the small twists required by the ``turn right'' edge-rouding condition.  In fact, it is not hard to see that if the $2nq$ sutures are evenly spaced on the unit disk in $\mathbb{C}$, then $K'$ is gotten by rotating counterclockwise by an angle of $\frac{2np+1}{2nq}\cdot 2\pi$.

Therefore, $\varphi_{2np+1}\paren{c(K)}$ will be the \emph{negative} contact element for $K'$, as the odd rotation of a basis of $H_1(R^+(K), \al^+)$ will yield a basis of $H_1(R^-(K'), \al^-)$.   The arguments given in Theorem \ref{thm:match} clearly apply equally to negative contact elements, and thus $K$ and $K'$ are matchable if and only if 
$$c^-(K) \wedge c^-(K')  = \Omega^-,$$
  or equivalently, if
$$ \langle c^-(K) \wedge\varphi_{2np+1}\paren{c(K)}| \Omega^+ \rangle = 1.$$
But by the definition of the interior product and Proposition \ref{prop:neg}, the left side is equal to
$$ \langle \varphi_{2np+1}\paren{c(K)}| \iota_{c^-(K)}\Omega^+ \rangle = \langle \varphi_{2np+1}\paren{c(K)}| c(K) \rangle = 1.$$
\end{proof}


\begin{thebibliography}{100}

\bibitem{Gabai} D. Gabai, \emph{Foliations and the topology of 3-manifolds.} J. Differential Geom. 18 (1983), 445–-503.

\bibitem{Giroux} E. Giroux, \emph{Convexit\'{e} en topologie de contact.} Comment. Math. Helv. 66 (1991), no. 4, 637-–677.

\bibitem{contcat} K. Honda, \emph{Contact structures, Heegaard Floer homology and triangulated categories.} In preparation.

\bibitem{Honda} K. Honda, \emph{On the classification of tight contact structures I.} Geom. Topol. 4 (2000), 309-–368

\bibitem{HondaII} K. Honda, \emph{On the classification of tight contact structures II.} J. Differential Geom. 55 (2000), 83--143

\bibitem{HKM} K. Honda, W. Kazez, G. Mati\'{c}, \emph{Contact Structures, Sutured Floer Homology and TQFT.} arXiv:0807.2431, 2008.

\bibitem{HKM1}  K. Honda, W. Kazez, G. Mati\'{c}, \emph{The contact invariant in sutured Floer homology.}  Invent. Math. 176 (2009), no. 3, 637-676.

\bibitem{juhasz} A. Juh\'{a}sz, \emph{Holomorphic discs and sutured manifolds.} Algebr. Geom. Topol. 6 (2006), 1429–-1457.

\bibitem{Massot} P. Massot, \emph{Infinitely many universally tight torsion free contact structures with vanishing Ozsv\'{a}th-Szab\'{o} invariants.} Mathematische Annalen 353 (2012), no. 4, 1351 -- 1376.

\bibitem{Mathews} D. Mathews, \emph{Chord diagrams, contact-topological quantum field theory, and contact categories.}  Algebr. Geom. Topol. 10 (2010), no. 4, 2091 -- 2189.

\bibitem{Mathews2} D. Mathews, \emph{Sutured 
oer homology, sutured TQFT and non-commutative QFT.} Algebr. Geom. Topol. 11 (2011), no. 5, 2681 -- 2739.

\bibitem{Mathews3} D. Mathews, \emph{Sutured TQFT, torsion, and tori.} arXiv:1102.3450, 2011.

\bibitem{itsybitsy} D. Mathews, \emph{Itsy bitsy topological field theory.} arXiv:1201.4584, 2012.

\bibitem{Ni} Y. Ni, \emph{Homological actions on sutured Floer homology.} arXiv:1010.2808, 2010.

\bibitem{ozsz} P. Ozsv\'{a}th, Z. Sz\'{a}bo, \emph{Holomorphic disks and topological invariants for closed three-manifolds.} Ann. of
Math. (2) 159 (2004), 1027–-1158.
\end{thebibliography}
\end{document}